\documentclass[11pt, reqno, a4paper]{amsart}
\usepackage[normalem]{ulem}
\usepackage[utf8]{inputenc}
\usepackage{a4wide}
\usepackage[foot]{amsaddr}

\usepackage{algorithm}
\usepackage{algpseudocode}
\usepackage{todonotes}

\usepackage{dsfont}
\usepackage{upgreek}
\usepackage{amssymb, amsthm, amsmath, enumerate, physics, mathrsfs, amsfonts, esint, empheq}
\usepackage[colorlinks=true, linkcolor=black, citecolor=black, urlcolor=black]{hyperref}
 \usepackage{accents}
\usepackage{fullpage}

\usepackage{natbib}
\usepackage{siunitx,booktabs}

\usepackage{tikz, graphicx, subcaption, pgfplots, xcolor}
\usepackage{rotating} 
\usepackage{multirow} 
\pgfplotsset{compat=1.18}
\usetikzlibrary{decorations.pathreplacing,calc,fit,intersections}

\allowdisplaybreaks
\numberwithin{equation}{section}

\newcommand{\R}{\mathbb{R}} 

\renewcommand{\L}{\mathcal{L}} 
\renewcommand{\H}{\mathcal{H}} 

\newcommand{\C}{\mathcal{C}} 
\renewcommand{\P}{\mathscr{P}} 

\newcommand{\G}{\mathcal{G}} 

\newcommand{\X}{\mathcal{X}} 
\newcommand{\Y}{\mathcal{Y}} 

\newcommand{\T}{\mathcal{T}_c} 
\newcommand{\f}{f_{\mathrm{cor}}} 
\NewDocumentCommand{\Lag}{ m O{\vb*{w}} O{\vb{z}} }{L^{#1}_c\qty(#2, #3)} 

\NewDocumentCommand{\eps}{ o }{\ensuremath{\varepsilon\IfValueT{#1}{_{#1}}}}

\theoremstyle{plain} 

\newtheorem{lemma}{Lemma}[section]

\theoremstyle{definition}
\newtheorem{defn}{Definition}[section]

\usepackage{todonotes}

\begin{document}

\title{A Semi-Discrete Optimal Transport Scheme for the Semi-Geostrophic Slice Compressible Model}
\author[1]{Th\'eo Lavier$^*$} \author[2]{Beatrice Pelloni$^\dagger$}

\address[*]{Laboratoire IMATH, Université de Toulon, Toulon, France}

\address[$\dagger$]{Maxwell Institute for Mathematical Sciences and Department of Mathematics, Heriot-Watt University, Edinburgh, UK}

\date{}

\begin{abstract}

We develop a semi-discrete optimal transport scheme for the compressible semi-geostrophic equations, a system that plays an important role in modelling large-scale atmospheric dynamics and frontogenesis. 
Unlike the incompressible case, the compressible equations involve variable density and internal energy, but can be recast into a variational framework that naturally couples the dynamics with an optimal transport formulation. 
This is done by a change to the so-called geostrophic coordinates, via a transformation inspired by the incompressible case. 
The discrete version of this variational formulation provides the basis for a  numerical particle scheme. 
The implementation of this scheme presents considerable challenges, due to a non-quadratic cost function and parabolic $c$-Laguerre cells. 
To address these challenges, we use $c$-exponential charts to construct $c$-Laguerre tessellations efficiently, ensuring conservation of mass and energy while preserving key geometric structures. 
We analyse the scheme and validate its convergence through numerical experiments, including a single-seed benchmark and error analysis. 
This work provides a significant new generalisation of existing semi-discrete optimal transport techniques, offering a robust and structure-preserving tool for simulating realistic atmospheric flows.

\end{abstract}

\maketitle

\section{Introduction}

\subsection{Background and motivation}

The compressible semi-geostrophic (SG) equations constitute a fundamental framework for modelling large-scale atmospheric dynamics, particularly in applications involving  frontogenesis and the evolution of geophysical flows past the time of weather front formation. 
In contrast with their incompressible counterparts, the compressible SG equations include variable density and internal energy, capturing essential thermodynamic processes. 
This added complexity is crucial for realistic atmospheric simulations but poses significant challenges in both the theoretical analysis and the numerical modelling of the equations. 

Over the past decades, optimal transport techniques have emerged as powerful tools for constructing structure-preserving numerical schemes in fluid dynamics \cite{Benamou:1998,Gallouet:2018}. 
Earlier work by \cite{Egan:2022, Lavier:2024} demonstrated the efficacy of semi-discrete optimal transport methods in the context of the incompressible SG equations, using Laguerre tessellations to generate transport maps that conserve mass and energy. 
The theoretical foundations and convergence results from \cite{Kitagawa:2019,Merigot:2021}, have consolidated the viability of numerical semi-discrete optimal transport methods. 
There has also been significant progress made on using full discrete optimal transport methods to simulate semi-geostrophic systems \cite{Benamou:2024,Benamou:2025,Carlier:2024}
The fully compressible SG system was first studied in \cite{Cullen:2003}, a work that clarified both the mathematical intricacies and the meteorological importance of this system. 
Recent contributions by \cite{Cotter:2013, Cotter:2025} provided test cases for frontogenesis in vertical slices, stressing the need to account for compressibility in realistic atmospheric models. 

A numerical particle scheme based on semi-discrete optimal transport techniques, first implemented for the incompressible SG system \cite{Egan:2022, Lavier:2024}, was recently extended to the compressible system in \cite{Bourne:2025}, laying the foundation for the developments presented here.
Specifically, the present paper builds on the results presented in \cite{Bourne:2025} by developing a novel semi-discrete optimal transport scheme specifically tailored for the compressible SG equations. 
This is done by transforming the governing equations to geostrophic coordinates, an approach inspired by the seminal work of Hoskins \cite{Hoskins:1975},  whereby the complex interplay between density, temperature, and velocity is captured within a variational framework. 
A central challenge in this formulation is that the optimal transport problem associated with the compressible dynamics is defined by a cost function that incorporates gravitational potential and compressibility effects but is not reducible to the classical quadratic cost appearing in the incompressible case. 
This cost function implies that the resulting discrete version of the problem involves optimal tessellation with cells bounded by segments of parabolic curves. 
The associated theoretical and computational challenges are new and specific to the compressible case. 
To address these issues, our scheme incorporates internal energy as a dynamic variable, ensuring consistency with the first law of thermodynamics while preserving the underlying geometric and energetic structures of the system.

The contributions presented in this paper are in three parts. 
In the first part, we present what is, to our knowledge, the first semi-discrete optimal transport scheme for the 2D compressible SG equations in any setting, thereby advancing significantly the possibility to simulate realistic atmospheric flows. 
In the second part, we develop efficient computational techniques for constructing tessellations associated with non-quadratic cost functions, a key innovation that ensures numerical stability and accuracy. 
In the third part, we validate the convergence and the energy-conserving properties of the method through a series of numerical experiments, indicating the potential for application in large-scale atmospheric simulations.

\subsection{Outline of the Paper}

This paper is organized as follows. 
In Section 2, we introduce the SG compressible slice equations, a reduced 2D model for a compressible, stratified, and rotating atmosphere. 
These equations are derived directly from the compressible Euler equations by applying the hydrostatic, vertical slice, and semi-geostrophic approximations. We show how the coordinate transformation to geostrophic variables leads naturally to a variational formulation involving an optimal transport problem.
In Section 3, we describe the semi-discrete optimal transport techniques and ideas underlying our numerical algorithm, detailing the construction of $c$-Laguerre tessellations and the introduction of $c$-exponential charts.
Finally, in Section 4 we specify the initial conditions and parameter choices used in our simulations and we validate the approach through numerical experiments, including a single-seed benchmark and error analysis using the Wasserstein-2 metric, before discussing representative simulation results.

We also include an appendix containing several key lemmas. As a consequence of these lemmas, volume integrals are reduced to boundary integrals via the divergence theorem, thereby enabling efficient computation of the dual functional and its derivatives. 

\section{Slice Compressible Model}\label{sec:Model}

In this section we derive a system of equations designed to model key compressible atmospheric dynamics within a two-dimensional vertical slice, while respecting hydrostatic balance and incorporating a semi-geostrophic approximation to isolate the large-scale balanced flow relevant to frontogenesis. 
It is notable that this model is not obtained through a reduction of the three-dimensional fully compressible SG system of \cite{Cullen:2003}, instead it is derived directly from the compressible Euler equations, following \cite{Cotter:2013}. 
The introduction of a vertical slice geometry is necessary because a quasi-2D solution, as can be found in the incompressible case \cite{Egan:2022}, cannot be derived by simple reduction of the 3D compressible model. 
More details on this derivation can be found in \cite{Lavier:2025}.

\subsection{Governing Equations and Physical Assumptions}

We begin with the three-dimensional \emph{compressible Euler equations}, formulated for a stratified, rotating atmosphere. 
The state of the system at a point $\vb{x}=(x_1,x_2,x_3)$, corresponding to longitude, latitude, and altitude, respectively, is described by the velocity field $\vb{u} = (u, v, w)$, the density $\rho$, and the potential temperature $\theta$. 
The thermodynamic state is closed by the equation of state for a dry ideal gas, which relates the pressure $p$ to the density and temperature via the Exner pressure $\Pi$ and is given by
\begin{equation}\label{eq:stateeq}
    \begin{split}
        p&=\rho R_d \theta\Pi, \\
        \Pi(\rho, \theta)&=\qty(\frac{\rho R_d\theta}{p_0})^{\gamma-1}, 
    \end{split}
\end{equation}
where $p_0$ is a reference pressure, $\gamma=\flatfrac{c_p}{c_v}$ is the adiabatic index, and $R_d=c_p-c_v$ is the specific gas constant for dry air. 

We adopt from the start the hydrostatic approximation, which assumes a balance between the vertical pressure gradient and gravity. 
Under this assumption, the governing equations in a reference frame rotating with frequency $\f$ about the vertical axis $\vu{e}_3$ are the following:
\begin{equation}\label{eq:fullrotatingsystem}
    \begin{cases}
        D_t \vb{u} + \f \vu{e}_3\times\vb{u}+c_p\theta\nabla\Pi+g\vu{e}_3 = 0, \\
        \partial_t \rho + \nabla\cdot\qty(\rho\vb{u}) = 0, \\
        D_t \Theta = 0,
    \end{cases}
\end{equation}
where the material derivative $D_t$ is given by :
\begin{equation}
    D_t = \frac{\partial}{\partial t} + \mathbf{u} \cdot \nabla = \frac{\partial}{\partial t} + u \frac{\partial}{\partial x_1} + v \frac{\partial}{\partial x_2} + w \frac{\partial}{\partial x_3}.
\end{equation}

\subsubsection{Vertical Slice and Semi-Geostrophic Approximation}

To derive the model analysed in this paper, we first apply the \emph{vertical slice approximation}. 
As in \cite{Cotter:2013}, we consider a two-dimensional domain
\begin{equation}
    \X:=[-L, L) \times \qty[0,H],
\end{equation}
which represents a vertical slice of the atmosphere in the $x_1$-$x_3$ plane. 
The model is periodic in the $x_1$-direction and we assume rigid boundaries at the top and bottom.
The velocity field $\vb{u}=(u, v, w)$ is decomposed into its in-slice component $\vb{u}_S=(u,w)$, and the transverse component, $v$.  

The key assumption made to derive the model is that all fields are independent of the transverse coordinate $x_2$, except for a background temperature gradient that is necessary to support baroclinic instability.
Accordingly, the in-slice velocity $\vb{u}_S$, density $\rho$, and the in-slice potential temperature perturbation $\theta$ are assumed to depend only on $(x_1,x_3,t)$. 
The total potential temperature $\Theta$ is decomposed as 
\begin{equation}\label{eq:ThetaFull}
    \Theta(x_1,x_2,x_3,t)=\theta_0+\overline{\theta}(x_2)+\theta(x_1,x_3,t),
\end{equation}
where $\theta_0$ is a constant reference temperature and $\overline{\theta}(x_2)=sx_2$ is a background stratification profile, with $s$ representing the background vertical shear. 
The model is then formulated in terms of the in-slice perturbation $\theta$. 

The momentum equations in the full model are replaced by the following balance relations:
\begin{align}
    v&=\frac{1}{\f}c_p\theta\partial_{x_1}\Pi, \label{eq:balancerelations1}\\
    \theta &=-\frac{g}{c_p\partial_{x_3}\Pi}.\label{eq:balancerelations2}
\end{align}

The \emph{semi-geostrophic slice compressible model}, introduced in \cite[Remark 5.3]{Cotter:2013}, is derived from the Euler equations (Eq.~\eqref{eq:fullrotatingsystem}) by enforcing the assumptions above, using the SG approximation (Eq.~\eqref{eq:balancerelations1} and Eq.~\eqref{eq:balancerelations2}), and adding an {\em ad hoc} term to the right hand side of Eq.~\eqref{eq:fullrotatingsystem} motivated by analogy with the benchmark incompressible Eady Slice model of \cite{Egan:2022}.
This additional term reduces the mean horizontal flow.
The model is given by
\begingroup
\setlength{\jot}{2pt}
\begin{subequations}\label{eq:SGSCM}
\begin{empheq}[left=\empheqlbrace]{alignat=3}
    &D_tv+\f u=sc_p\qty(\Pi-\Pi_0)                                   
    &&\quad \mathrm{in}\;\X\times[0,t_f], \label{eq:2Dcompmom} \\
    &D_t\theta + sv =0                                                
    &&\quad \mathrm{in}\;\X\times[0,t_f], \label{eq:2Dcomptemp} \\
    &D_t\rho+\rho\nabla\cdot\vb{u}_S=0               
    &&\quad \mathrm{in}\;\X\times[0,t_f], \label{eq:2Dcompcont} \\
    &c_p\theta\nabla\Pi=\qty(\f v,-g)^T                                 
    &&\quad \mathrm{in}\;\X\times[0,t_f], \\
    &\vb{u}_S\cdot\vu{n} = 0         
    &&\quad \mathrm{on}\; \X\times[0,t_f] \cap \{x_3=0, H\}, \label{eq:rigid_lid_bc} \\
    &\vb{u}_S(-L,x_3)=\vb{u}_S(L,x_3)
    &&\quad \mathrm{on}\; \X\times[0,t_f] \label{eq:Periodic}.
\end{empheq}
\end{subequations}
\endgroup
Here $D_t=\frac{\partial}{\partial t}+\vb{u}_S\cdot\nabla_2=\partial_t + u\partial_{x_1}+w\partial_{x_3}$ is the in-slice material derivative and $\Pi_0$ is the initial spatial average of the Exner pressure, namely
\begin{equation}\label{eq:RefPress}
    \Pi_0 = \fint_{\X} \Pi \,\dd \vb{x},
\end{equation}
which is included to reduce the mean horizontal flow.
In this formulation, the transverse velocity $v$ is influenced by both the Coriolis force and the background shear $s$. 

In what follows, we assume the existence of a solution of this system and we focus on deriving and studying a particle scheme for its numerical approximation.

\subsection{Formulation in geostrophic variables.}

For the analysis of SG, both incompressible and compressible, the key step is the formulation in a different set of variables. 
These variables were first introduced by Hoskins in \cite{Hoskins:1975}, but the full potential of this formulation was only understood in groundbreaking work of Benamou and Brenier \cite{Benamou:1998}. 
In these new, geostrophic variables, the incompressible 3D system is formulated as a continuity equation for the potential vorticity coupled with an optimal transport problem, with respect to a quadratic cost, between a fixed source measure and the time-dependent measure defined by the potential vorticity, denoted below by $\alpha(\vb{x},t)$. 

In general, the optimal transport involves pushing forward a source measure, denoted by $\sigma(\vb{x},t)$, to the target measure  $\alpha(\vb{x},t)$ in a way that minimises the overall cost $\T$ as defined by the associated cost function $c(\vb{x},\vb{y})$, given by
\begin{equation}\label{OTgen}
    \T=\min_{(T_t)\#\sigma_t=\alpha_t}\int_{\X}c(\vb{x},T_t(\vb{x})) \,\dd \sigma_t(\vb{x}).
\end{equation}

The compressible 3D system can also be formulated by an analogous variable change as a continuity equation coupled with an optimal transport problem \cite{Cullen:2003}.
However, in this case the source measure is time-dependent, and the optimal transport between this source measure and the potential vorticity $\alpha$ is to be considered with respect to a non-quadratic cost. 
Both these elements are new and require significant modifications as well as novel approaches both for the theory and the numerical simulations of the compressible system. 

\subsubsection{The Source Measure $\sigma_t$.}

Let $\X\subset \R^2$ denote the 2D physical domain of the fluid.
The formulation and analysis of the 3D compressible SG system sketched in the previous paragraph motivate the introduction of the quantity $\sigma_t(\vb{x})=\rho(\vb{x},t)\theta(\vb{x},t)$ defined on $\X$. 
This quantity plays the role of the source measure in the associated optimal transport problem.

The equation describing the dynamics of  $\sigma_t$ can be obtained
by combining Eq.~\eqref{eq:2Dcompcont} and Eq.~\eqref{eq:2Dcomptemp} as follows
\begin{align*}
    &\rho\qty(D_t\theta+sv)+\theta\qty(D_t\rho+\rho\nabla\cdot\vb{u}_S)=0 \\
    &\Longleftrightarrow \quad \rho\partial_t\theta+\theta\partial_t\rho=-\rho\vb{u}_S\cdot\nabla\theta-\theta\vb{u}_S\cdot\nabla\rho-\rho\theta\nabla\cdot\vb{u}_S-\rho s v \\
    &\Longleftrightarrow \quad\rho\partial_t\theta+\theta\partial_t\rho=-\nabla\cdot\qty(\rho\theta\vb{u}_S)-\rho s v 
    \end{align*}
    and is given by
    \begin{equation}
\partial_t\sigma_t=-\nabla\cdot\qty(\sigma_t\vb{u}_S)-\frac{\sigma_t}{\theta}sv. \label{eq:PDESigma}
\end{equation}
We use \eqref{eq:PDESigma} to show that $\sigma_t$ is a probability measure, provided this is true initially.
\begin{lemma}[$\sigma_t$ is a Probability Measure]
    Assume at time $t=0$ that $\sigma_0$ is a probability measure. 
    Then $\sigma_t$ is a probability measure for all $t$, i.e.
    \[
        \int_{\X}\,\dd\sigma_t(\vb{x})=1.
    \]
\end{lemma}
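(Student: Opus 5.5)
The plan is to show that the total mass $M(t):=\int_{\X}\sigma_t(\vb{x})\,\dd\vb{x}$ is constant in time, and then use $M(0)=1$. Nonnegativity of $\sigma_t=\rho\theta$ comes from the balance relation \eqref{eq:balancerelations2}. There $\theta=-g/(c_p\partial_{x_3}\Pi)$ is positive as long as the Exner pressure decreases with height, and $\rho>0$. So the real content of the lemma is the conservation of total mass.

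First, I integrate \eqref{eq:PDESigma} over $\X$ and differentiate under the integral sign. This gives
\[
\frac{\dd}{\dd t}M(t)=-\int_{\X}\nabla\cdot\qty(\sigma_t\vb{u}_S)\,\dd\vb{x}-s\int_{\X}\rho v\,\dd\vb{x},
\]
using $\sigma_t/\theta=\rho$. By the divergence theorem, the first term is a boundary flux $\int_{\partial\X}\sigma_t\,\vb{u}_S\cdot\vu{n}$. The contributions from $x_3=0,H$ vanish by the rigid-lid condition \eqref{eq:rigid_lid_bc}. The lateral contributions at $x_1=\pm L$ cancel by periodicity \eqref{eq:Periodic}, since all fields are periodic in $x_1$ and the outward normals there are opposite.

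The main step, and the only non-routine one, is to show that the source term $\int_{\X}\rho v\,\dd\vb{x}$ vanishes. Using \eqref{eq:balancerelations1}, I write $\rho v=\frac{c_p}{\f}\rho\theta\,\partial_{x_1}\Pi$. I then turn this into an exact $x_1$-derivative of the pressure via the equation of state \eqref{eq:stateeq}. Since $\Pi=(p/p_0)^{\gamma-1}$ in terms of $p=\rho R_d\theta\Pi$, we have $p=p_0\Pi^{1/(\gamma-1)}$. Hence
\[
\partial_{x_1}p=\frac{p}{(\gamma-1)\Pi}\,\partial_{x_1}\Pi=\frac{R_d}{\gamma-1}\,\rho\theta\,\partial_{x_1}\Pi .
\]
It follows that
\[
\rho v=\frac{c_p(\gamma-1)}{\f R_d}\,\partial_{x_1}p=\frac{\gamma}{\f}\,\partial_{x_1}p ,
\]
where I used $R_d=c_p-c_v$ and $\gamma=c_p/c_v$. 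Integrating over $\X$ and using periodicity of $p$ in $x_1$ then gives $\int_{\X}\rho v\,\dd\vb{x}=0$.

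Combining the two steps gives $\frac{\dd}{\dd t}M(t)=0$, so $M(t)=M(0)=1$ for all $t$. This proves the lemma, under the standing assumption that a sufficiently smooth solution exists.
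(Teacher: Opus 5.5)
Your argument is the paper's: integrate \eqref{eq:PDESigma}, remove the flux term using \eqref{eq:rigid_lid_bc}--\eqref{eq:Periodic}, and write the source term as an exact $x_1$-derivative of a periodic quantity. The paper uses $\Pi=(R_d/p_0)^{\gamma-1}\sigma_t^{\gamma-1}$ to get $C\,\partial_{x_1}\sigma_t^{\gamma}$, and your $p$ is just $R_d^{\gamma}p_0^{1-\gamma}\sigma_t^{\gamma}$, so the two routes coincide.

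There is one slip in the equation-of-state step. The paper's state law is $\Pi=(\rho R_d\theta/p_0)^{\gamma-1}$, not $(p/p_0)^{\gamma-1}$. Hence $p=p_0\Pi^{\gamma/(\gamma-1)}$ and $\partial_{x_1}p=c_p\rho\theta\,\partial_{x_1}\Pi$, which gives $\rho v=\f^{-1}\partial_{x_1}p$ (ordinary geostrophic balance) rather than $\gamma\f^{-1}\partial_{x_1}p$. Only the exact-derivative structure is used, so your conclusion is unaffected.
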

\begin{proof}
     Our goal is to show that the time derivative of $\int_{\X}\,\dd\sigma_t(\vb{x})$ is zero.
     Note the following relationship
     \begin{equation}\label{eq:sigmarelationships}
         \sigma_t\partial_{x_1}\qty(\sigma_t^{\gamma-1})=\qty(\gamma-1)\sigma_t\sigma_t^{\gamma-2}\partial_{x_1}\sigma_t=\qty(\frac{\gamma-1}{\gamma})\partial_{x_1}\sigma_t^{\gamma}.
     \end{equation}
     Using the geostrophic balance relation Eq.~\eqref{eq:balancerelations2} and the equation of state (Eq.~\eqref{eq:stateeq}), the source term in Eq.~\eqref{eq:PDESigma}, can be rewritten as
     \begin{equation}
         \frac{\sigma_t}{\theta}sv\stackrel{\eqref{eq:balancerelations1}}{=}\frac{sc_p}{\f}\sigma_t\partial_{x_1}\Pi\stackrel{\eqref{eq:stateeq}}{=}\frac{sc_p}{\f}\qty(\frac{R_d}{p_0})^{\gamma-1}\sigma_t\partial_{x_1}\sigma_t^{\gamma-1}\stackrel{\eqref{eq:sigmarelationships}}{=}C\partial_{x_1}\sigma_t^{\gamma},
     \end{equation}
     where the constant $C$ is defined as 
     \begin{equation}
         C = \frac{sc_p}{\f}\qty(\frac{R_d}{p_0})^{\gamma-1}\qty(\frac{\gamma-1}{\gamma}).
     \end{equation}
     Thus, the evolution equation for $\sigma_t$, Eq.~\eqref{eq:PDESigma} can be written as
     \begin{equation}
         \partial_t\sigma_t + \nabla\cdot(\sigma_t\vb{u}_S) = - C\partial_{x_1}\sigma_t^\gamma.
     \end{equation}
    A direct computation, using the boundary conditions \eqref{eq:rigid_lid_bc} and \eqref{eq:Periodic}, then shows that 
    \[
        \dv{}{t}\int_{\X}\,\dd\sigma_t(\vb{x}) =0.
    \]
    Therefore $\sigma_t$ defines a probability measure.
\end{proof}

\subsubsection{ The Target Measure $\alpha_t$.}

We now consider the transformation to geostrophic variables analogous to the one considered in \cite{Lavier:2025}. 
For a fixed time $t$, this is the transformation $T_t:\X\to \Y$ given by
\begin{equation}\label{eq:2Dcompcoord}
    T_t(\vb{x}) = \mqty(x_1 + \f^{-1} v(\vb{x},t) \\ \frac{g}{\theta_0\f^2}\theta(\vb{x}, t)).
\end{equation}
The potential vorticity $\alpha_t$ is the density of the probability measure given by $\alpha_t=(T_t)_{\#}\sigma_t$, where $\#$ denotes the measure pushforward. 
In order to derive the PDE for the measure $\alpha_t$, we first take the material derivative of Eq.~\eqref{eq:2Dcompcoord}:
\begin{align}
    D_tT_t = \mqty(D_tx_1 + \f^{-1}D_tv \\ \frac{g}{\f^2\theta_0}D_t\theta) = \mqty(u_1 + sc_p\f^{-1}\qty(\Pi-\Pi_0) - u_1 \\ -\frac{sg}{\f^2\theta_0}v) =  \frac{sg}{\f\theta_0}\mqty( \frac{c_p\theta_0}{g}\qty(\Pi-\Pi_0) \\ \qty(\vb{x}-T_t)\cdot\vu{e}_1).
\end{align}
Define on $\X$ the velocity $\vb{W}(\cdot, t)$ as
\begin{align}
    \vb{W}:=J\mqty( \qty(\mathrm{Id}-T_t)\cdot\vu{e}_1 \\ \frac{c_p\theta_0}{g}\qty(\Pi_0-\Pi)),
\end{align}
where
\[
    J=\frac{sg}{\f\theta_0}\mqty(0 & - 1 \\ 1 & 0).
\]
Then
\[
    \partial_t T_t =-\vb{u}_S\cdot\nabla T_t + \vb{W}.
\]
Using the coordinate transformation we can define a new quantity $\mathcal S$, whose value can be extracted from $\theta$ and $v$, as 
\begin{equation}
  \mathcal{S}:=\frac{sv}{\theta}=
    \frac{sg}{\f\theta_0}\frac{\qty(T_t-\mathrm{Id})\cdot\vu{e}_1}{T_t\cdot\vu{e}_3}.
\end{equation}

The weak form of the PDE modelling the dynamics of $\alpha_t$ can now be derived by an explicit computation and, for any test function $\varphi$, is given by 
\begin{equation}\label{eq:2Dcompweakform}
    \dv{}{t}\int_{\Y}\varphi(\vb{y})\,\dd\alpha_t(\vb{y})=\int_{\Y} \nabla\varphi(\vb{y})\cdot \vb{w}(\vb{y},t)\,\dd\alpha_t(\vb{y}) -\int_{\Y}\varphi(\vb{y})S(\vb{y},t)\,\dd\alpha_t(\vb{y}),\quad \varphi\in\C^{\infty}_c(\Y),
\end{equation}
where 
\begin{align}
    \vb{w}(\vb{y},t)&=\vb{W}(T_t^{-1}(\vb{y}),t)=J\mqty( \qty(T_t^{-1}(\vb{y})-\vb{y})\cdot\vu{e}_1 \\ \frac{c_p\theta_0}{g}\qty(\Pi_0-\Pi(T_t^{-1}(\vb{y}),t))) \label{eq:GeoVel} \\
    S(\vb{y},t)&=\mathcal{S}(T_t^{-1}(\vb{y}),t)=\frac{sg}{\f\theta_0}\frac{\qty(\vb{y}-T_t^{-1}(\vb{y}))\cdot\vu{e}_1}{\vb{y}\cdot\vu{e}_3}, \label{eq:GeoMass}
\end{align}
which are the geostrophic velocity and the source/sink in geostrophic coordinates. 
We have thus obtained (the weak form of) the forced continuity equation
\begin{equation}\label{eq:alphaPDEwrong}
    \partial_t\alpha_t +\nabla\cdot\qty(\alpha_t\vb{w}) = -\alpha_t S.
\end{equation}

Notice that the vertical component of the geostrophic velocity $\vb{w}(\vb{y},t)$, which we denote as $w_3=\vb{w}\cdot\vu{e}_3$, can be extracted directly from Eq.~\eqref{eq:GeoVel} as
\begin{equation}
    w_3= - \frac{sg}{\f\theta_0}(\vb{y}-T^{-1}_t(\vb{y}))\cdot\vu{e}_1.
\end{equation}
Comparing this expression with Eq.~\eqref{eq:GeoMass}, we find an exact algebraic relationship between the source term $S$ and the vertical component of the geostrophic velocity, 
\begin{equation}
    S(\vb{y},t) = -\frac{w_3}{y_3}.
\end{equation}
Substituting this relationship back into Eq.~\eqref{eq:alphaPDEwrong} yields
\begin{equation}
    \partial_t\alpha_t +\nabla\cdot\qty(\alpha_t\vb{w}) = \alpha_t \frac{w_3}{y_3}.
\end{equation}
By dividing the entire equation by $y_3$ and applying the product rule for the divergence operator, we reveal that the forced continuity equation can be rewritten as a homogeneous conservative continuity equation for a modified measure 
\begin{equation}\label{eq:alphaPDE}
    \partial_t\frac{\alpha_t}{y_3}+\nabla\cdot\qty(\frac{\alpha_t}{y_3}\vb{w}) = 0.
\end{equation}
This demonstrates that while the potential vorticity $\alpha_t$ formally contains a source term due to compressibility effects, its evolution is in fact strictly slaved to the geostrophic vertical coordinate. 

\subsubsection{The Energy Functional and the Optimal Transport Problem.}

The total geostrophic energy associated to the semi-geostrophic approximation \eqref{eq:SGSCM}, namely the sum of kinetic, potential, and internal energy contributions, is given by
\begin{equation}\label{eq:2Dcompenergy1}
    E_g(t)=\int_{\Omega} \frac{\rho}{2}v^2+g\rho x_3 -c_p\rho\Pi_0\theta +c_v\rho\Pi\theta \,\dd  \vb{x},
\end{equation}
by Eq.~\eqref{eq:2Dcompcoord}, this can be rewritten as
\begin{equation}   
    \begin{split}
    E_g(t)&=F(T_t,\sigma_t,\alpha_t) \\
    &:=\int_{\X}\qty(\frac{\f^2}{2T_t(\vb{x})_3}\qty(x_1-T_t(\vb{x})_1)^2+g\frac{x_3}{T_t(\vb{x})_3}-c_p\Pi_0)\sigma_t \,\dd \vb{x}+\int_{\X} f(\sigma_t(\vb{x})) \,\dd \vb{x},
    \end{split}
\end{equation}
where the dependence of $\sigma_t$ on $\alpha_t$ has been suppressed for legibility\footnote{See \cite{Bourne:2025} for full details on the relationship between the source measure, the energy, and the target measure.} and the internal energy density $f$ is defined by
\begin{equation}
    f(s)=\begin{cases}
        \kappa s^{\gamma} & \text{if}\;s\geq0, \\
        \infty & \text{otherwise}.
    \end{cases}
\end{equation}
The {\em Cullen convexity principle} states that $F(\cdot,\sigma_t,\alpha_t)$ is minimised over all mass-preserving rearrangements of fluid particles, i.e. that solutions of Eq.~\eqref{eq:SGSCM} should also satisfy
\begin{equation}
    E_g(t) = \min_{(S_t)_{\#}\sigma_t=\alpha_t}F(S_t,\sigma_t,\alpha_t).
\end{equation}
This can be written in terms of optimal transport as  
\[
    E_g(t)=\T(\sigma_t,\alpha_t)+\int_{\X}f(\sigma_t)\,\dd\vb{x}=:E(\sigma_t,\alpha_t),
\]
where $\T$, defined in \eqref{OTgen}, is the solution to an optimal transport problem with respect to the cost defined as
\begin{equation} \label{eq:compcost2D}
    c(\vb{x},\vb{y}) = \frac{\f^2}{2y_3}\qty(x_1-y_1)^2+g\frac{x_3}{y_3}-c_p\Pi_0.
\end{equation}

\subsection{Semi-discrete optimal transport.}\label{sec:SDOT}

We now introduce a discretisation of the of PDE \eqref{eq:alphaPDE} by using the template of semi-discrete optimal transport. 
Namely, we consider a set of points $\vb{z}_t\in\Y$ and define a particle approximation of the potential vorticity $\alpha_t$ for fixed $t$
\begin{equation}
    \alpha_t^N(\vb{z})=\sum_{i=1}^N m_t^i\delta_{\vb{z}_t^i}.
\end{equation}
As was done in \cite{Lavier:2025}, to which we refer for details, we now use ideas of semi-discrete optimal transport. 
Given a discrete target measure such as $\alpha_t^N$, the optimal transport map with respect to the cost $c(\vb{x},\vb{y})$ becomes an optimal tessellation of $\X$, called a $c$-Laguerre tessellation. 
Let the set of \emph{seed vectors} be given by
\begin{equation}
    D^N := \left\{ \vb{z}= \left(\vb{z}^1,\ldots,\vb{z}^N \right) \in \Y^{N} : z_3^i\neq z_3^j \text{ whenever } i\neq j \right\},
\end{equation}
and the set of admissible mass given by
\begin{equation}
    \Delta^N := \left\{ \vb{m} = (m^1, \ldots, m^N) \in (0,1]^N : \sum_{i=1}^N m^i = 1 \right\}.
\end{equation}
The barycenters of the cells in this tessellation are known as the centroids, $\vb{C}^i$.
\begin{defn}[$c$-Laguerre tessellation]
\label{def:cLagCells} 
Given $(\vb*{w},\vb{z}) \in \R^N\times D^N$, the \emph{$c$-Laguerre tessellation of $\X$ generated by $(\vb*{w},\vb{z})$} is the collection of \emph{Laguerre cells} $\{ \L \}_{i=1}^N$ defined by
\begin{align*}
    \L:=\qty{\vb{x}\in \X:c(\vb{x},\vb{z}^i)-w^i\leq c(\vb{x},\vb{z}^j)-w^j\; \forall\,j\in\qty{1,\ldots,N}},
    \quad i \in \{1,\ldots,N\}.
\end{align*}
\end{defn}
\begin{defn}[Centroid map]
\label{def:centroid}
Given $\vb{m} \in \Delta^N$ and a source measure $\sigma\in\P_{\mathrm{ac}}(\X)$, define the {\em centroid map} $\vb{C}:D^N\times\Delta^N\to(\R^3)^N$ by $\vb{C}(\vb{z}) \coloneqq (\vb{C}^1(\vb{z}),\ldots,\vb{C}^N(\vb{z}))$, where
\begin{equation} \label{eq:centroid0}
     \vb{C}^i(\vb{z}, \vb{m})\coloneqq\frac{1}{m^i}\int_{L_c^i(\vb*{w}_*(\vb{z},\vb{m}),\vb{z})}\vb{x} \, \dd \sigma(\vb{x}).
\end{equation}
For clarity the dependence on the mass will be suppressed. 
\end{defn}

This says that $\vb{C}^i(\vb{z})$ is the centroid (or barycentre) of the set of points $T_{\alpha^N}^{-1}(\{ \vb{z}^i \})$ transported to $\vb{z}^i$ in the optimal transport from $\sigma$ to $\alpha^N$ for the cost $c$.

\subsubsection*{PDE Discretisation}

Starting with the velocity term on the right-hand side of Eq.~\eqref{eq:2Dcompweakform} we can write
\begin{equation}\label{eq:compvel}
    \int_{\Y}\nabla\varphi(\vb{y})\cdot\vb{w}(\vb{y},t)\,\dd\alpha_t(\vb{y})=\sum_{i=1}^N m_t^i \nabla \varphi(\vb{z}_t^i)\cdot J\mqty((\vb{C}^i(\vb{z}_t)-\vb{z}^i_t)\cdot\vu{e}_1 \\ \frac{c_p\theta_0}{g}E_I(\vb{z}_t, \vb{m}_t)),
\end{equation}
where
\begin{equation}
    E_I(\vb{z}_t, \vb{m}_t):=\frac{1}{m_t^i}\int_{L^i_c(\vb*{w}(\vb{z}_t, \vb{m}_t),\vb{z}_t)}\Pi_0-\Pi(\vb{x})\,\dd\sigma_t(\vb{x}).
\end{equation}
The source/sink term gives
\begin{equation}\label{eq:sourcesink}
    \int_{\Y}\varphi(\vb{y})S(\vb{y},t) \,\dd\alpha_t(\vb{y})=\sum_{i=1}^N m_t^i \varphi(\vb{z}^i_t)\frac{\qty(\vb{z}^i_t-\vb{C}^i(\vb{z}_t))\cdot\vu{e}_1}{\vb{z}^i_t\cdot\vu{e}_3}.
\end{equation}
The left-hand side of Eq.~\eqref{eq:2Dcompweakform} yields
\begin{equation}\label{eq:lefthandside2D}
    \dv{}{t}\int_{\Y} \varphi(\vb{y})\,\dd\alpha_t(\vb{y})=\sum_{i=1}^N\qty[ \dot{m}_t^i\varphi(\vb{z}_t^i)+m_t^i\nabla\varphi(\vb{z}_t^i)\cdot\dot{\vb{z}}^i_t].
\end{equation}
Combining equations \eqref{eq:compvel}, \eqref{eq:sourcesink}, and \eqref{eq:lefthandside2D} gives
\begin{equation}
    \begin{split}
    &\sum_{i=1}^N\qty[ \dot{m}_t^i\varphi(\vb{z}_t^i)+m_t^i\nabla\varphi(\vb{z}_t^i)\cdot\dot{\vb{z}}^i_t] \\
    &=\sum_{i=1}^N m_t^i \nabla \varphi(\vb{z}_t^i)\cdot J\mqty((\vb{C}^i(\vb{z}_t)-\vb{z}^i_t)\cdot\vu{e}_1 \\ \frac{c_p\theta_0}{g}E_I(\vb{z}_t, \vb{m}_t))-\sum_{i=1}^N m_t^i \varphi(\vb{z}^i_t)\frac{\qty(\vb{z}^i_t-\vb{C}^i(\vb{z}_t))\cdot\vu{e}_1}{\vb{z}^i_t\cdot\vu{e}_3}.
    \end{split}
\end{equation}
Fix $i$.
First choose $\varphi$ such that $\varphi(\vb{z}^i_t)=1$, $\varphi(\vb{z}^i_t)=0$ for all $j\neq i$ and $\varphi$ constant in a neighbourhood of $\vb{z}^k_t$ for all $k$.
This gives an ODE for $m^i$.
Then choosing $\varphi$ to be a constant in a neighbourhood of $\vb{z}^j_t$ for all $j\neq i$ implies that $\nabla\varphi=0$ in a neighbourhood of $\vb{z}^j_t$ and taking $\varphi$ such that $\nabla\varphi(\vb{y})=\vb{y}$ in a neighbourhood of $\vb{z}^i_t$ then gives an ODE for $\vb{z}^i$.
The evolution of the $i$-th particle, for $i\in\qty{1,\ldots,N}$, is governed by the coupled ordinary differential equations:
\begin{equation}\label{eq:particle_system}   
    \begin{cases}
        \dot{\vb{z}}^i_t= J\vb{w}^i(t), \\
        \dot{m}_t^i= -m_t^iS^i(t),
    \end{cases}
\end{equation}
where
\begin{align*}
    \vb{w}^i(t)&=\mqty((\vb{C}^i(\vb{z}_t)-\vb{z}^i_t)\cdot\vu{e}_1 \\ \frac{c_p\theta_0}{g}E_I(\vb{z}_t, \vb{m}_t)), \\
    S^i(t) &= \frac{sg}{\f\theta_0}\frac{\qty(\vb{z}^i_t-\vb{C}^i(\vb{z}_t))\cdot\vu{e}_1}{\vb{z}^i_t\cdot\vu{e}_3}.
\end{align*}

However, as observed in the continuous formulation, this system contains an exact cancellation. 
From the definition of $J$, the vertical velocity of the $i$-th particle is explicitly
\begin{equation}
    \dot{z}^i_{t,3}= \frac{sg}{\f\theta_0}\qty(\vb{z}^i_t-\vb{C}^i(\vb{z}_t))\cdot\vu{e}_1.
\end{equation}
Comparing this to the source term $S^i(t)$, we find the discrete analogue of the relationship derived in the continuous section
\begin{equation}
    S^i(t)=-\frac{\dot{z}^i_{t,3}}{z^i_{t,3}}.
\end{equation}
Substituting this directly into the mass evolution equation gives:
\begin{equation}
    \dot{m}^i_t=m^i_t\frac{\dot{z}^i_{t,3}}{z^i_{t,3}}\implies \frac{\dot{m}^i_t}{m^i_t}=\frac{\dot{z}^i_{t,3}}{z^i_{t,3}}.
\end{equation}
Integrating this expression reveals an exact algebraic relationship between the mass of the $i$-th cell and its vertical position:
\begin{equation}
    m^i_t=m_0^i\frac{z^i_{t,3}}{z^i_{0,3}}.
\end{equation}
Therefore, the dynamic evolution of the system is completely determined by the ordinary differential equations for the particle positions $z^i_{t,3}$, with the cell masses $m^i_t$ evolving proportionally to their vertical coordinates.
This fundamentally differs from the incompressible cases, where the masses $m_i$ are constant in time.

\section{Numerical Method}
In this section we describe the numerical implementation  of this 2D slice model. The numerical simulation of the solutions of the ODE system \eqref{eq:particle_system} constitutes the first step towards a full 3D discretisation scheme for the compressible semi-geostrophic system.  

\subsection{Optimal Transport Framework}

The discretisation derived in Section~\ref{sec:Model} reduces the evolution of the atmosphere to a system of ODEs, Eq.~\eqref{eq:particle_system} governing the particle positions $\vb{z}^i$, while the masses $m^i$ are updated exactly at each timestep through their algebraic dependence on the vertical coordinate. 
However, evaluating the driving vector fields, specifically the centroids $\vb{C}^i$ and internal energy terms $E_I$, requires the computation of  the exact $c$-Laguerre tessellation at every timestep. 

While particle positions $\vb{z}$ are known from the time-integration, the weights $\vb*{w}\in\R^N$ defining the cells are implicit. 
They must be uniquely determined to satisfy the mass consistency condition required by the particle approximation 
\begin{equation} \label{eq:mass_constraint_num} 
    \mathcal{M}^i(\vb*{w}) := \int_{L_c^i(\vb*{w}_*, \vb{z})} \,\dd\sigma = m^i, \quad \forall\,
    i=1, \dots, N. 
\end{equation}
Finding the weights that satisfy Eq.~\eqref{eq:mass_constraint_num} is equivalent to solving the dual formulation of the semi-discrete optimal transport problem. 
We recover the optimal weights $\vb*{w}_*$ by maximizing the Kantorovich dual function 
\begin{equation}\label{eq:Gfunc}
    \G(\vb*{w},\vb{z})=\sum_{i=1}^N w^im^i - \int_{L^i_c(\vb*{w},\vb{z})} f^*\qty(w^i-c(\vb{x},\vb{z}^i))\,\dd\vb{x}.
\end{equation}
The functional $\G$ is concave and smooth, allowing us to solve for $\vb*{w}_*$ efficiently using a damped Newton algorithm. 
The gradient of this functional is exactly the mass mismatch error,
\begin{equation}
    \nabla_{\vb*{w}} \G = m^i -  \mathcal{M}^i,
\end{equation}
while its Hessian is determined by the geometric interactions between adjacent cells (see Appendix~\ref{appendix:Integral_Reductions} and \cite{Bourne:2025}).

In the compressible setting, this problem is computationally demanding because the source measure $\sigma$ is non-uniform and evolves in time, while in addition the cost function is non-quadratic. 
To compute the necessary integrals for the gradient and Hessian efficiently, we exploit the specific geometry of the cost function, as detailed in the following section. 

\subsection{Geometry of the Cost Function}\label{sec:Geom_Cost}

The efficient minimisation of the dual functional $\G$ requires computing integrals over the Laguerre cells. 
Due to the form of the cost function, the boundaries of these cells are segments of curved surfaces, making construction of the tessellation computationally expensive and integration over the cells expensive and geometrically complex. 

To simplify the problem, we map the computation into a coordinate system where the cell boundaries become linear.
For a fixed $\vb{y}\in\Y$, define the diffeomorphism
\begin{equation}
    \Phi=\Phi(\cdot,\vb{y}):\mathbb{R}^2 \to \mathbb{R}^2
\end{equation}
called the \emph{$c$-exponential map} (see \cite[Remark 4.4]{Kitagawa:2019}), by
\begin{equation}
    \Phi(\vb{x};\vb{y}) := \exp_{\vb{y}}^c(\vb{x}) = (-D_y c(\cdot,\vb{y}))^{-1}(\vb{x}), 
\end{equation}
which explicitly can be written as
\begin{equation}
    \Phi(\vb{x};\vb{y})=\mqty(
    \displaystyle
    \frac{y_3}{\f^2} x_1 + y_1
    \\
    \displaystyle
    \frac{y_3^2}{g} \left( x_3 - \frac{x_1^2}{2\f^2} \right)
    ).
\end{equation}

This change of variables simplifies the geometric structure of the problem significantly. 
Specifically, we show that the cost function satisfies the so-called {\em Loeper's condition} \cite{Kitagawa:2019}, which 
ensures that the cells are convex in the transformed coordinates. 

\begin{lemma}[Loeper's condition] \label{lem:2DLoeper}
    Given any $\vb{y},\vb{z} \in \Y$, define  $u: \mathbb{R}^2 \to \mathbb{R}$ by
    \[
        u(\vb{x}) = c(\Phi(\vb{x};\vb{y}),\vb{y}) - c(\Phi(\vb{x};\vb{y}),\vb{z}).
    \]
    Then $u$ is affine. 
    In particular, $u$ is convex, and hence the 2D compressible semi-geostrophic cost function $c$ satisfies Loeper's condition.
\end{lemma}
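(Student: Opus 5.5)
The plan is a direct computation. The key observation I expect is stronger than the statement: for every fixed $\vb{z}\in\Y$, the map $\vb{x}\mapsto c(\Phi(\vb{x};\vb{y}),\vb{z})$ is already affine. The difference $u$ is then affine as a difference of two affine functions; in particular it is convex, and hence satisfies Loeper's condition in the form of \cite{Kitagawa:2019}. The $c$-Laguerre cells are then polygons in the chart coordinates.

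\textbf{Step 1: set up the chart coordinates.} Fix $\vb{y}$ and write $\vb{X}=\Phi(\vb{x};\vb{y})$, so that
\[
X_1=\frac{y_3}{\f^2}x_1+y_1,\qquad X_3=\frac{y_3^2}{g}\Big(x_3-\frac{x_1^2}{2\f^2}\Big).
\]
Substitute these into the cost \eqref{eq:compcost2D} with a generic second argument $\vb{z}$, for which $z_3\neq 0$ since $\Y$ lies in $\{y_3>0\}$. This gives
\[
c(\vb{X},\vb{z})=\frac{\f^2}{2z_3}\big(X_1-z_1\big)^2+\frac{g}{z_3}X_3-c_p\Pi_0 .
\]

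\textbf{Step 2: expand the quadratic term.} Write $X_1-z_1=\frac{y_3}{\f^2}x_1+(y_1-z_1)$. Then the first term equals
\[
\frac{y_3^2}{2\f^2z_3}x_1^2+\frac{y_3(y_1-z_1)}{z_3}x_1+\frac{\f^2(y_1-z_1)^2}{2z_3}.
\]
The second term equals
\[
\frac{y_3^2}{z_3}x_3-\frac{y_3^2}{2\f^2 z_3}x_1^2 .
\]

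\textbf{Step 3: observe the cancellation.} The two $x_1^2$ contributions cancel exactly, and no other nonlinear terms appear. Hence
\[
c(\Phi(\vb{x};\vb{y}),\vb{z})=\frac{y_3(y_1-z_1)}{z_3}x_1+\frac{y_3^2}{z_3}x_3+\frac{\f^2(y_1-z_1)^2}{2z_3}-c_p\Pi_0,
\]
which is affine in $\vb{x}$. Applying this with $\vb{z}$ and with $\vb{z}=\vb{y}$ shows that $u$ is affine, and therefore convex. This is exactly the property used below to make the cell boundaries linear in the chart.

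\textbf{Main obstacle.} The only delicate point is bookkeeping of the constants $\f$, $g$ and $y_3$, so that the cancellation in Step 3 is verified exactly. This is also the conceptual content of the lemma: the chart $\Phi$, being the $c$-exponential map, is precisely constructed to absorb the parabolic part of the cost.
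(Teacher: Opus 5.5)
Your proposal is correct and is essentially the paper's proof. You substitute $\Phi(\vb{x};\vb{y})$ into the cost, note that the $x_1^2$ term from the quadratic part cancels the one from $g\Phi_2/z_3$, and conclude that $\vb{x}\mapsto c(\Phi(\vb{x};\vb{y}),\vb{z})$ is affine with the same coefficients as the paper. The only difference is cosmetic: the paper computes the $\vb{y}$- and $\vb{z}$-terms separately, while you compute once for a general second argument and set $\vb{z}=\vb{y}$ to recover $y_3x_3-c_p\Pi_0$.
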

\begin{proof}
    First we compute that, for all $\vb{x} \in \mathbb{R}^2$,
    \begin{equation*}
        c(\Phi(\vb{x};\vb{y}),\vb{y})=\frac{1}{y_3}\left(\frac{\f^2}{2}(\Phi_1(\vb{x};\vb{y})-y_1)^2+g\Phi_2(\vb{x};\vb{y})\right)-c_p\Pi_0= y_3 x_3 -c_p\Pi_0.
    \end{equation*}
    Similarly,
    \begin{align*}
        c(\Phi(\vb{x};\vb{y}),\vb{z}) 
        & = \frac{1}{z_3}\left(\frac{\f^2}{2}(\Phi_1(\vb{x};\vb{y})-z_1)^2+g\Phi_2(\vb{x};\vb{y})\right) - c_p\Pi_0
        \\
        & = 
        \frac{\f^2}{2z_3} \left(\frac{y_3}{\f^2} x_1 + y_1-z_1\right)^2+\frac{g}{z_3} \left( \frac{y_3^2}{g} \left( x_3 - \frac{x_1^2}{2\f^2} \right) \right) - c_p\Pi_0
        \\
        & = \frac{\f^2}{2 z_3}\left(y_1-z_1\right)^2
        + \frac{y_3^2}{z_3} x_3
        + \frac{y_3x_1}{z_3} \left(y_1-z_1\right) -  c_p\Pi_0.
    \end{align*}
    Therefore $u$ is affine, as required. 
\end{proof}

This result has a crucial practical implication. 
In the coordinate system defined by $\Phi$, the Laguerre cell is a convex polygon defined by the intersection of half planes (for the 3D analogue see \cite{Bourne:2025}). 
This allows us to employ efficient computational geometry algorithms, specifically half-plane clipping, to compute the volume and moments required by the Newton solver.

Note that, a priori,  the periodicity in the $x_1$-direction complicates the construction of the tessellation. 
However, in the $x_1$ direction the cost function is the standard quadratic cost.
Hence the results of \cite{Bourne:2023} can be applied without substantial modification, while taking into account, in determining the correct periodic tessellation, that the $c$-exponential chart is not a periodic transformation. 

In practice, computing the integrals for the dual functional and its derivatives relies on this transformation.
As detailed in Appendix~\ref{appendix:Integral_Reductions}, we map the computation into a single $c$-exponential chart defined around a reference point $y=[0,1]^T$.
Within this chart, the Laguerre cells form convex polygons, allowing us to use the divergence theorem to reduce the required 2D volume integrals to 1D boundary line integrals.
This avoids expensive quadrature over the cell in favour of line integrals yielding cheap and accurate gradients and Hessians used in our Newton solver.
Furthermore, we also visualize the tessellations directly within this transformed coordinate system.
This is why the physically rectangular domain appears to have curved boundaries in Figure~\ref{fig:grid_evolution}.
The transformation renders the complex parabolic cell boundaries as straight-edged polygons, simultaneously warping the global domain.
For more detailed discussions of $c$-exponential charts and their use for semi-discrete optimal transport methods for the semi-geostrophic equations see \cite{Bourne:2025, Lavier:2025}.

\subsection{Non-Dimensionalisation}

To non-dimensionalise the ODE in Eq.~\eqref{eq:particle_system}, we introduce  dimensionless variables, marked with tildes, using the following scalings :
\begin{align*}
    t &= \frac{\f\theta_0}{sg}\tilde{t}, \\
    x_1 &= L_0\widetilde{x}_1, \\
    x_2 &= H_1\widetilde{x}_2, \\
    z^i_1 &= L_0\widetilde{z}^i_1, \\
    z^i_2 &= H_2\widetilde{z}^i_2.
\end{align*}
Here, $L_0$ denotes the horizontal reference scale.
We utilize two distance vertical reference scales $H_1$ for the physical height and $H_2$ for the geostrophic vertical coordinate, which accounts for the thermodynamic scaling inherent in the coordinate transformation.  
Substituting these relations into the cost function we obtain
\begin{align*}
    c\qty(\widetilde{\vb{x}},\widetilde{\vb{z}}^i) &= \frac{1}{H_2\widetilde{z}^i_2}\qty(\frac{\f^2}{2}L_0^2(\widetilde{x}_1-\widetilde{z}^i_1)^2+gH_1\widetilde{x}_2) - c_p\Pi_0 \\
    &= \frac{\f^2L_0^2}{2H_2\widetilde{z}^i_2}\qty(\widetilde{x}_1-\widetilde{z}^i_1)^2+\frac{gH_1\widetilde{x}_2}{H_2\widetilde{z}^i_2} - c_p\Pi_0.
\end{align*}
Defining the non-dimensional parameters
\begin{align}
    F&=\sqrt{\frac{\f^2 L_0^2}{H_2}}, \\
    G&=\frac{gH_1}{H_2},
\end{align}
the cost function becomes
\begin{equation}
    c\qty(\widetilde{\vb{x}},\widetilde{\vb{z}}^i) = \frac{F^2}{2\widetilde{z}^i_2}\qty(\widetilde{x}_1-\widetilde{z}^i_1)^2+G\frac{\widetilde{x}_2}{\widetilde{z}^i_2} - c_p\Pi_0.
\end{equation}
In our simulations, we select the reference scales $L_0=10^7$\unit{m}, $H_1=10^5$\unit{m}, and $H_2=10^6$\unit{m}.
These choices result in unity parameters $F=1$ and $G=1$, significantly simplifying the implementation. 
Note that these reference scales differ from the physical boundaries $L$ and $H$ defined in Table~\ref{tab:params}.

The non-dimensional form of the particle evolution system Eq.~\eqref{eq:particle_system} (ignoring the mass ODE as we showed that it is slaved to the particle positoin) is given by
\begin{equation}\label{eq:nondimpartsys}
    \begin{cases}
    \displaystyle
        \dot{\tilde{z}}_1^i=\frac{\alpha}{m^i}\int_{\Lag{i}} \Pi(\vb{x})-\Pi_0 \,\dd \sigma_t^i(\vb{x}) , \\
    \displaystyle
        \dot{\tilde{z}}^i_2=\beta\qty(\widetilde{C}^i_1-\widetilde{z}_1^i), 
    \end{cases}
\end{equation}
where the dimensionless coupling constants $\alpha$ and $\beta$ are given by :
\begin{equation}
    \alpha=\frac{c_p\theta_0}{L_0g}\qq{and}\beta=\frac{L_0}{H_2}.
\end{equation}

\subsection{Description of the Numerical Algorithm}

Our numerical scheme follows and extends the semi-discrete framework developed for incompressible flows outlined in \cite{Egan:2022, Lavier:2024} to the compressible setting. 
The algorithm evolves the discrete particle system by splitting the problem into two distinct phases at each timestep.
A geometric construction phase, i.e.~solving the optimal transport problem, and a dynamic evolution phase, i.e.~integrating the ODES.

The procedure for advancing the system from time $t_n$ to $t_{n+1}=t_n+\Delta t$ is detailed below. 
\vspace{0.1cm}
\paragraph{Step 1 : Optimal Transport} 
At time $t_n$, the particle positions $\vb{z}$ and the masses $m$ are fixed. 
To evaluate the driving vector fields, we must first construct the Laguerre tessellation that satisfies the mass constraint. 
Using a damped Newton solver we recover the unique optimal weights $\vb*{w}_*$ by maximising the dual functional $\G$ given in \eqref{eq:Gfunc}. 
We use a rescaling algorithm suggested by \cite{Meyron:2019} to initialise the solver and accelerate convergence. 
For each Newton iteration, the Laguerre cells are constructed in the $c$-exponential charts introduced in Section~\ref{sec:Geom_Cost}.
In these local coordinates, the cells are convex polygons, allowing for robust construction via half-plane intersection algorithms. 

\vspace{0.1cm}
\paragraph{Step 2 : Integral Evaluation}
A central challenge in this scheme is the accurate computation of the integrals defining the gradient and Hessian of $\G$, the centroids, and internal energy. 
Since the cells are polygonal in the $c$-exponetial charts, we avoid expensive quadrature by converting area integrals into boundary line integrals where possible using the divergence theorem, see Appendix~\ref{appendix:Integral_Reductions}.

\vspace{0.1cm}
\paragraph{Step 3 : Dynamic Evolution}
Once the optimal weights $\vb*{w}_*$ and the corresponding cells are established, we evaluate the non-dimensional velocity fields from Eq.~\eqref{eq:nondimpartsys}.
We then advance the particle state using a standard explicit Adams-Bashforth scheme (AB2).
Crucially, the particle masses $m^i$ are note integrated numerically.
Instead, they are updated exactly at each timestep using the algebraic constraint
\begin{equation}
    m^i(t)=m^i(0)\frac{z^i_2(t)}{z^i_2(0)},
\end{equation}
derived in Section~\ref{sec:SDOT}.
At first glance, the presence of a source term in the governing PDE (Eq.~\eqref{eq:alphaPDEwrong}) suggests that a fully compressible scheme would require integrating a third coupled ODE for the mass evolution, which inherently risks the accumulation of numerical drift. 
By exploiting this exact analytical relationship, our algorithm completely bypasses the numerical integration of a mass equation. 
This ensures mass conservation and prevents accumulation of numerical errors associated with the mass evolution. 

This approach using the optimal transport solver naturally preserves the global conservation properties of the system while capturing the compressibility effects unique to this model.

\section{Numerical Experiments}

In this section, we validate our numerical algorithm and demonstrate its capability to simulate compressible atmospheric dynamics. 
We begin by defining the experimental configuration, including the physical parameters and initial conditions used in our simulations.
We then present a benchmark study based on an analytical single-seed solution to quantify error convergence, followed by full-scale simulations of frontogenesis.

\subsection{Experimental Setup}

Our simulations follow the vertical slice frontogenesis test case adapted from \cite{Cotter:2025}.
The physical domain is a periodic channel in the horizontal direction $x_1$ with rigid boundaries at the top and bottom.

\subsubsection{Parameters}

We adopt the parameter choices listed in Table~\ref{tab:params}, which correspond to realistic atmospheric scales. 
In addition to these dimensional values, the initial condition is characterised by the non-dimensional Burger number $\mathrm{Bu}=0.5$ and the amplitude parameter $a=-7.5$.

\begin{table}[!ht]
    \centering
    \begin{tabular}{|c||cl|}
    \hline
    Parameter & Value & Unit \\
    \hline
    $L$ & $10^6$ & \unit{m} \\
    $H$ & $10^4$ & \unit{m} \\
    $\f$ & $10^{-4}$ & \unit{s^{-1}} \\
    $N$ & $5\times10^{-3}$ & \unit{s^{-1}} \\
    $g$ & $10$ & \unit{m.s^{-2}} \\
    $p_0$ & $10^5$ & \unit{kg.m^{-1}.s^{-2}} \\
    $\theta_0$ & $300$ & \unit{K} \\
    $s$ & $-3\times10^{-6}$ & \unit{K.m^{-1}} \\
    $c_p$ & $1003.5$ & \unit{m^2.K^{-1}.s^{-2}} \\
    $R_d$ & $287.052874$ & \unit{m^2.K^{-1}.s^{-2}} \\
    \hline
    \end{tabular}
    \caption{Physical parameters used in the numerical simulations of compressible frontogensis.}
    \label{tab:params}
\end{table}

\subsubsection{Initial Conditions}

The initial state is defined by a hydrostatic base state perturbed to induce frontogenesis. 
Recall from Eq.~\eqref{eq:ThetaFull} that the total potential temperature $\Theta$ is decomposed into a reference value, a background vertical profile, and a time-dependent perturbation
\begin{equation}
    \Theta(\vb{x},t) = \theta_0 + \overline{\theta}(x_2) + \theta(x_1,x_3,t).
\end{equation}
The in-slice potential temperature $\theta$ is further split into a hydrostatic base state $\overline{\theta}(x_3)$ and a perturbation $\widetilde{\theta}(x_1,x_3)$
\begin{equation}
    \theta(x_1,x_3) = \overline{\theta}(x_3) + \widetilde{\theta}(x_1,x_3).
\end{equation}
The base state is stratified according to the Brunt-V\"ais\"al\"a frequency $N$
\begin{equation}
    \overline{\theta}(x_3) = \theta_0 \exp\qty(\frac{N^2}{g}\qty(x_3-\frac{H}{2})).
\end{equation}
To trigger the dynamics, we impose the following perturbation
\begin{equation}
    \widetilde{\theta}(x_1, x_3) = \frac{\theta_0 a N}{g} \qty(-\qty(1-\frac{\mathrm{Bu}}{2}\coth{\frac{\mathrm{Bu}}{2}})\sinh{Z(x_3)}\cos\frac{\pi x_1}{L}-n \mathrm{Bu}\cosh{Z(x_3)}\sin\frac{\pi x_1}{L}),
\end{equation}
where the vertical coordinate is scaled as
\begin{equation}
    Z(x_3)=\mathrm{Bu}\qty(\frac{x_3}{H}-\frac{1}{2}),
\end{equation}
and the normalisation constant $n$ is given by 
\begin{equation}
    n = \frac{1}{\mathrm{Bu}}\qty(\qty(\frac{\mathrm{Bu}}{2}-\tanh{\frac{\mathrm{Bu}}{2}})\qty(\coth\frac{\mathrm{Bu}}{2}-\frac{\mathrm{Bu}}{2}))^{\frac{1}{2}}.
\end{equation}

Finally, the initial particle distribution is determined by the transport map $T_0(\vb{x})$, which maps the physical coordinates to geostrophic coordinates. 
This map depends on the meridional velocity $v$, derived from the Exner pressure $\Pi$ via the hydrostatic balance relation
\begin{equation}
    T(\vb{x}, t) = \mqty(x_1 + \f^{-1}v(\vb{x}, t) \\ \frac{g\theta(\vb{x},t)}{\f^2\theta_0}) .
\end{equation}

\subsection{Validation: Single Seed Benchmark}

To establish a foundational test case and validate the accuracy of our integral computations, we consider a first benchmark problem consisting of a single particle seed. 
Unlike the full simulation, this scenario admits an analytical solution, providing a rigorous ground truth for verifying our numerical implementation. 

For this specific benchmark, we adopt the following simplified set of non-dimensional parameters
\begin{equation}
    N=c_p=\theta_0=s=R_d=p_0=\Pi_0=\f=g=1\,, \quad \gamma=2\,, \quad \kappa=\frac{1}{2}.
\end{equation}
The computational domain is a two-dimensional box, periodic in the horizontal direction, defined as
\begin{equation}
    \X=(-1,1]\times[0,1].
\end{equation}

In this setting, the optimal weight $w_*$ for the single Laguerrre cell is determined solely by the mass constraint, which requires the total mass of the cell to integrate to one. 
This leads to the integral equation 
\begin{equation}\label{eq:singleseedweights}
    1=\int_{L^i_c(\vb*{w},\vb{z})}f'(w-c(\vb{x},\vb{z})) \,\dd \vb{x},
\end{equation}
for $\vb{x}=(x_1,x_2)^T$ and $\vb{z}=(z_1, z_2)^T$.
Solving this equation yields the optimal weight $w_*$ as a function of the particle's vertical position $z_2$.

To solve for the optimal weight $w_*$, we first define the function $p(w,z_2)$
\begin{equation}
    \begin{split}
    p(w, z_2)&=\frac{1}{60 z_2}\left(3-32 \sqrt{2} \sqrt{w z_2+z_2-1}\right.\\
    &\quad-4 (w+1) z_2 \left(-16 \sqrt{2} \sqrt{w z_2+z_2-1}\right.\\
    &\left.\left.\quad\quad+(w+1) z_2 \left(8 \sqrt{2} \sqrt{w z_2+z_2-1}-15\right)+5\right)\right).
    \end{split}
\end{equation}
By solving Equation~\eqref{eq:singleseedweights}, we obtain the optimal weight $w_*$ as a piecewise function of the particle's vertical position, $z_2$
\begin{equation}
     w_*=\begin{cases}
        \frac{1}{6z_2}\qty(4-3z_2) & \text{if } z_2\geq \frac{5}{3}, \\
         \frac{1-6 z_2+6\sqrt{z_2-\frac{1}{45}}}{6 z_2} & \text{if } \frac{1}{45}\leq z_2\leq\frac{43}{60}, \\
          \text{The solution to } 1=p(w, z_2) &\text{otherwise}.
     \end{cases}
\end{equation}

The relationship between the optimal weight $w_*$ and the particle's height $z_2$ is visualized in Figure~\ref{fig:optimal_weight_vs_z2}. 

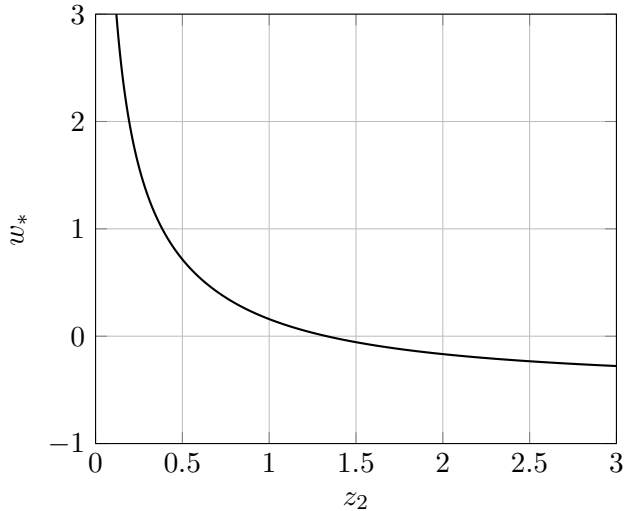
\begin{figure}
    \centering
    \begin{tikzpicture}
    \begin{axis}[
        title={Optimal Weight $w_*$ as a Function of Particle Height $z_2$},
        xlabel={$z_2$},
        ylabel={$w_*$},
        xmin=0, xmax=3, 
        ymin=-1, ymax=3,
        grid=major,
    ]
    
    \addplot[
        domain=1/45:43/60, samples=200,
        color=black, thick,
    ]
    { (1 - 6*x + 6*sqrt(x - 1/45)) / (6*x) };
    
    \addplot[
        domain=5/3:3, samples=100,
        color=black, thick,
    ]
    { (4 - 3*x) / (6*x) };

    \addplot[
        color=black, thick, 
    ]
    table {numerical_data.txt};
    
    \end{axis}
    \end{tikzpicture}
    \caption{The optimal weight $w_*$ derived from the integral, plotted as a function of the parameter $z_2$.}
    \label{fig:optimal_weight_vs_z2}
\end{figure}
   
With the optimal weight determined, we can analytically evaluate the derived quantities. 
The first component of the centroid of the Laguerre cell, $C_1$, simplifies elegantly due to the symmetry of the domain and the cost function.
With the change of variables, $u=x_1-z_1$,
\begin{align}
    C_1&=\int_{L^i_c(w_*,\vb{z})}x_1f'(w_*-c(\vb{x},\vb{z})) \,\dd \vb{x} =\int_0^1\int_{z_1-1}^{z_1+1}x_1f'(w_*-c(\vb{x},\vb{z})) \,\dd \vb{x} \\
    &=\int_0^1\int_{-1}^{1}\qty(u+z_1)f'\qty(w_*-\frac{1}{z_2}\qty(\frac{1}{2}u^2+x_2)+1) \,\dd u\dd x_2 \\
    &=\int_{L^i_c(w_*,\vb{z})}\underbrace{uf'\qty(w_*-\frac{1}{z_2}\qty(\frac{1}{2}u^2+x_2)+1)}_{\substack{\text{odd function in } u \\ \text{over symmetric domain } [-1, 1]}}\,\dd u \dd x_2\\
    &\quad+z_1\int_{L^i_c(w_*,\vb{z})}f'\qty(w_*-\frac{1}{z_2}\qty(\frac{1}{2}u^2+x_2)+1)\,\dd u \dd x_2 \\
    &= 0+z_1 = z_1.
\end{align}
The integral of the odd function over a symmetric domain vanishes, resulting in the first component of the centroid simply being the particle's horizontal position, $z_1$. 
Referring back to our non-dimensional ODEs (Eq.~\eqref{eq:nondimpartsys}), this equality immediately implies that the vertical velocity of the particle is zero. 
Consequently, through our exact algebraic mass-height relationship, the mass of the particle remains perfectly constant, which is exactly required for a single-seed system holding the total domain mass. 
Next, we evaluate the internal energy contribution, $E_I$, which is defined as
\begin{align}
    E_{I} &= \int_{L^i_c(w_*,\vb{z})}f'(w_*-c(\vb{x},\vb{z}))-f'(w_*-c(\vb{x},\vb{z}))^2 \,\dd \vb{x} \\
    &= 1-\int_{L^i_c(w_*,\vb{z})}f'(w_*-c(\vb{x},\vb{z}))^2\,\dd \vb{x}.
\end{align}
To express the internal energy as a function of $z_2$, we introduce the function $q(w_*,z_2)$ 
\begin{equation}
\begin{split}
    q(w_*,z_2)&=1-\frac{1}{420 z_2^2} \Biggl( 128 \sqrt{2} \sqrt{w_* z_2 + z_2 - 1} \\
    & -2 (w_*+1) z_2 \biggl( 3 \left(64 \sqrt{2} \sqrt{w_* z_2+z_2-1}-7\right) \\
    & \qquad +2 (w_*+1) z_2 \Bigl(-96 \sqrt{2} \sqrt{w_* z_2+z_2-1} \\
    & \qquad \qquad +2 (w_*+1) z_2 \left(16 \sqrt{2} \sqrt{w_* z_2+z_2-1}-35\right)+35\Bigr) \biggr) -5 \Biggr).
\end{split}
\end{equation}
Similar to the optimal weight, the internal energy $E_I$ is a piecewise function of $z_2$
\begin{equation}
     E_I=\begin{cases}
        \frac{1}{2} - \frac{19}{90z_2^2} & \text{if } z_2\geq \frac{5}{3}, \\
        1- \frac{630 z_2 \sqrt{225 z_2-5}+28 \sqrt{225 z_2-5}-20}{14175 z_2^2} & \text{if } \frac{1}{45}\leq z_2\leq\frac{43}{60}, \\
         q(w_*, z_2), \text{ where } w_* \text{ solves } 1=p(w, z_2)  &\text{otherwise}. \\
     \end{cases}
\end{equation}

Figure~\ref{fig:internal_energy_vs_z2} plots the internal energy $E_I$ as a function of the particle's height $z_2$. 

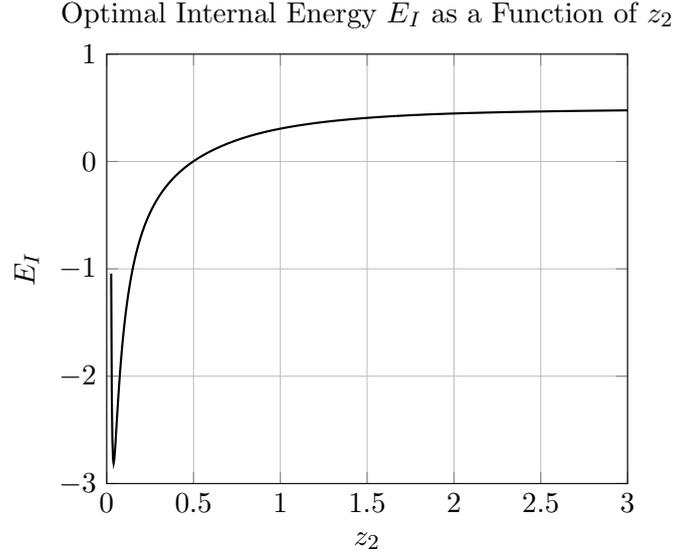
\begin{figure}
    \centering
    \begin{tikzpicture}
    \begin{axis}[
        title={Optimal Internal Energy $E_I$ as a Function of $z_2$},
        xlabel={$z_2$},
        ylabel={$E_I$},
        xmin=0, xmax=3,
        ymin=-3, ymax=1, 
        grid=major,
    ]
    
    \addplot[
        domain=5/3:3, samples=100,
        color=black, thick,
    ]
    { 0.5 - (19/(90*x^2)) };
    
    \addplot[
        domain=1/45:43/60, samples=200,
        color=black, thick,
    ]
    { 1 - ( (630*x*sqrt(225*x-5) + 28*sqrt(225*x-5) - 20) / (14175*x^2) ) };

    \addplot[
        color=black, thick,
    ]
    table {numerical_energy_data.txt};
    
    \end{axis}
    \end{tikzpicture}
    \caption{The optimal internal energy $E_I$ as a function of the parameter $z_2$.}
    \label{fig:internal_energy_vs_z2}
\end{figure}

This analysis simplifies the governing ODE system to a trivial trajectory
\begin{equation}
    \begin{cases}
    \displaystyle
        \dot{z}_1=E_I(z_2), \\
    \displaystyle
        \dot{z}_2=0, 
    \end{cases}
\end{equation}
which implies the particles moves horizontally with a constant velocity determined by its fixed height.
Given an initial condition $\overline{\vb{z}} = \vb{z}_0$, the trajectory is given by
\begin{equation}
    \vb{z}_t=\qty(E_I(\overline{z}_2) t +\overline{z}_1)\vu{e}_1.
\end{equation}
This analytical solution for the particle's path in the single seed benchmark provides a crucial tool for validating our numerical methods. 
By comparing the output of our numerical simulations against this exact solution, we confirmed that our algorithms for cell construction and integral evaluation are functioning correctly and producing accurate values.

\subsection{Simulation Results}

\subsubsection{Full Simulation Dynamics}

The evolution of the compressible frontogensis test case is visualized in Figure~\ref{fig:grid_evolution}.
The figure displays snapshots of the particles positions alongside the interpolated Eulerian fields for velocity, potential temperature, and mass density at days 2, 4, 7, and 11.

Qualitatively, our results capture the standard lifecycle of the baroclinic instability, exhibiting excellent agreement with the structural evolution reported in the reference finite-element solutions.
However, a notable difference in our simulation is the significant horizontal propagation or drift of the frontal system.

This lateral motion arises from our treatment of the reference Exner pressure constant, $\Pi_0$.
In the reference study \cite{Cotter:2025}, $\Pi_0$ is tuned specifically to minimize the mean horizontal wind and centre the front within the domain.
In contrast, our method strictly adheres to the physical definition given in Eq.~\eqref{eq:RefPress}, where $\Pi_0$ is the initial spatial average of the Exner pressure.
Consequently, our solution retains a non-zero mean geostrophic flow.
This results in the front drifting across the periodic domain and appearing more elongated due to the shearing effect of the background mean flow.

The dynamics proceed through distinct phases.
The system begins with a broad, gentle wave structure characteristic of linear normal mode. 
The particle distribution remains relatively uniform, though initial displacements are visible.
The instability then matures into a sharp frontal discontinuity.
The Lagrangian nature of our scheme is most evident here, as the particle positions clearly show clustering in the zone of the front, naturally adapting the effective resolution of the region of highest gradients. 

The coupling of the potential temperature and mass fields reveals the coherent thermodynamic structure of the frontal circulation. 
By viewing these fields in tandem, we observe a strong correlation between the warm sector and regions of mass accumulation at the upper boundary. 
Physically, this is consistent with the vertical transport inherent in baroclinic instability : buoyant warm air rises and is effectively compressed against the rigid upper lid, resulting in the observed local mass increase. 
Conversely, the cold sector near the surface is associated with lower mass, consistent with the sinking and spreading of denser air. 
This suggests that the compressible formulation successfully captures the asymmetric density response of the rising warm air versus the sinking cold air.

\begin{figure}[htbp]
    \centering
    \begin{tabular}{cc}
        \begin{minipage}{0.01\textwidth}
            \vspace{0pt} \centering
            \rotatebox{90}{\text{Day 2}} \\[2.5cm] 
            \rotatebox{90}{\text{Day 4}} \\[2.5cm]
            \rotatebox{90}{\text{Day 7}} \\[2.5cm]
            \rotatebox{90}{\text{Day 11}}
        \end{minipage}
        &
        \begin{minipage}{0.9\textwidth}
            \includegraphics[width=\linewidth]{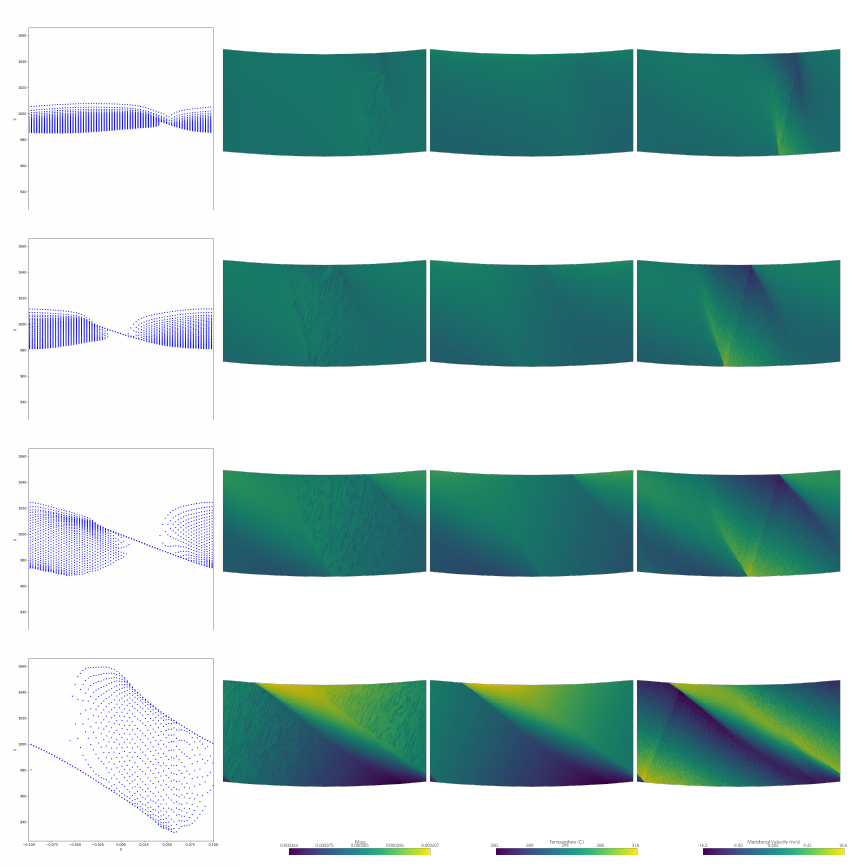}
        \end{minipage}
    \end{tabular}
    
    \caption{Evolution of particle positions (column 1), velocity (column 2), temperature (column 3), and mass (column 4) over days 2, 4, 7, and 11. Unlike the reference solution \cite{Cotter:2025}, the front exhibits horizontal drift and elongation due to the definition of $\Pi_0$.}
    \label{fig:grid_evolution}
\end{figure}

\subsubsection{Conservation Properties}

To quantify the conservation properties of the numerical scheme, we compute the relative error normalized by the mean value of the quantity over the simulation. 
For a generic quantity $Q$, the time-dependent error is defined as 
\begin{equation}\label{eq:errordefn}
    \mathrm{Error}(t)=\frac{Q_{\mathrm{mean}}-Q(t)}{Q_{\mathrm{mean}}}.
\end{equation}

Figure~\ref{fig:energy} displays the relative error in the total geostrophic energy, $E_g(t)$. 
While the continuous semi-geostrophic system formally conserves this energy, our fully discrete scheme introduces a temporal discretization error via the explicit Adams-Bashforth integration. 
The plot shows that this relative error remains small and bounded, verifying the stability of the time-stepping scheme even as the flow develops sharp gradients during the formation of the front.

\begin{figure}[!ht]
    \centering
    \includegraphics[width=0.5\linewidth]{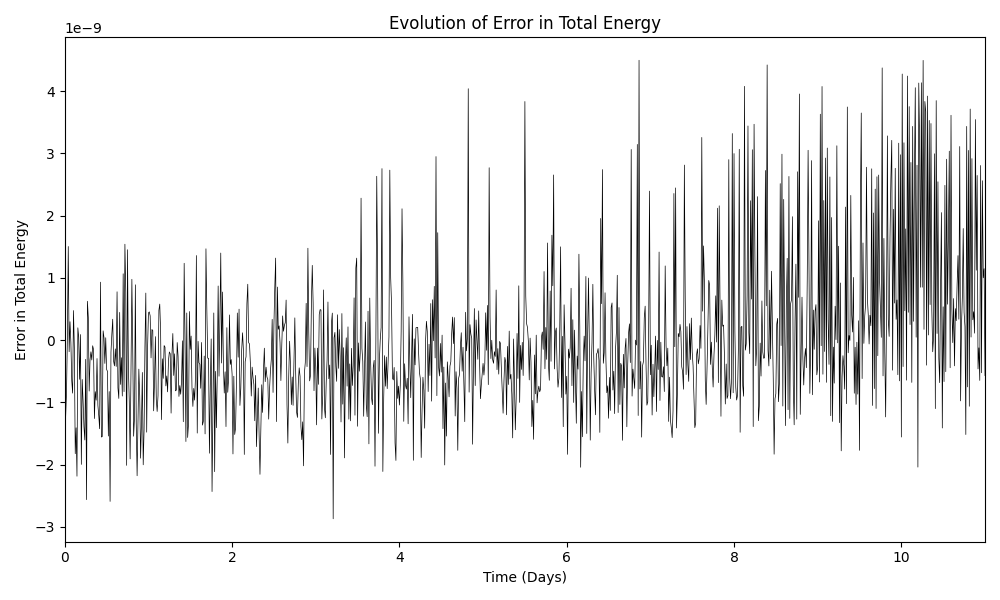}
    \caption{Evolution of the relative error in total geostrophic energy (Eq. \ref{eq:errordefn}). The error remains bounded, demonstrating the stability of the symplectic-like optimal transport integration.}
    \label{fig:energy}
\end{figure}

\subsubsection{Convergence Analysis}

To rigorously assess the numerical convergence of the proposed scheme, we quantify the error in the particle distribution using the Wasserstein-2 distance. 
Given the Lagrangian nature of the solution, the error at time $t$ is defined between the approximate solution and a high-resolution reference solution
\begin{equation} 
    \mathrm{Error}(t) = W_2\qty(\sum_{i=1}^Nm^i\delta_{\vb{z}^i(t)}, \sum_{j=1}^{N_{\mathrm{ref}}} m_{\mathrm{ref}}^j \delta_{\vb{z}^j_{\mathrm{ref}}(t)}). 
\end{equation} 
Because the semi-geostrophic equations are known to be sensitive to (see \cite{Lavier:2024}), we restrict our convergence analysis to day one. 
This ensures we are measuring the convergence of the numerical scheme rather than the divergence of the physical system.

Figure~\ref{fig:timesteperror} shows the convergence with respect to the timestep size $h$, comparing against a reference simulation with $h_{\mathrm{ref}}=15\unit{min}$. 
Since the number of particles $N$ is fixed for this study, the transport map between the solutions is the identity. 
We therefore compute the error using a normalized mass-weighted Euclidean norm: 
\begin{equation} 
    \mathrm{Error}(t) = \frac{1}{C} \sqrt{\sum_{i=1}^N m^i \norm{\vb{z}^i(t) - \vb{z}^i_{\mathrm{ref}}(t)}^2}. 
\end{equation} 
Although we employ a second-order Adams-Bashforth (AB2) time-integration scheme, the results in Figure~\ref{fig:timesteperror} exhibit a convergence rate closer to first-order. 
This reduction in order is attributed to the lack of global smoothness in the internal energy term, $E_I$. 
As shown in the single seed benchmark, $E_I$ is only piecewise smooth; the presence of ``kinks" in the energy landscape introduces discontinuities in the derivatives of the velocity field, which degrades the theoretical order of the ODE solver.

Figure~\ref{fig:particleerror} presents the convergence with respect to the spatial resolution, i.e.~the number of particles $N$. 
Here, we compare against a high-resolution reference with $N_{\mathrm{ref}}=2592$.
Because the particle indices do not correspond between simulations of different sizes, we compute the $W_2$ distance by solving the optimal transport problem directly. 
To handle slight variations in total mass and discretization, we utilize the \emph{OTT-JAX} library to solve the entropic Unbalanced Sinkhorn problem, taking the square root of the transport cost to return the distance metric.

The results show a convergence rate of approximately $\mathcal{O}(N^{-\frac{1}{2}})$. 
This result is consistent with the theoretical optimal quantization rate for measures in two dimensions. 
As established in \cite{Kloeckner:2012}, the optimal approximation of an absolutely continuous measure by a discrete set of $N$ points scales as $W_p\sim N^{-\frac{1}{d}}$. 
For our 2D slice model ($d=2$), this implies $W_2\sim N^{-\frac{1}{2}}$, confirming that our scheme achieves the optimal spatial convergence rate allowed by the discretization.

\begin{figure}[!ht]
    \centering
    \includegraphics[width=0.5\linewidth]{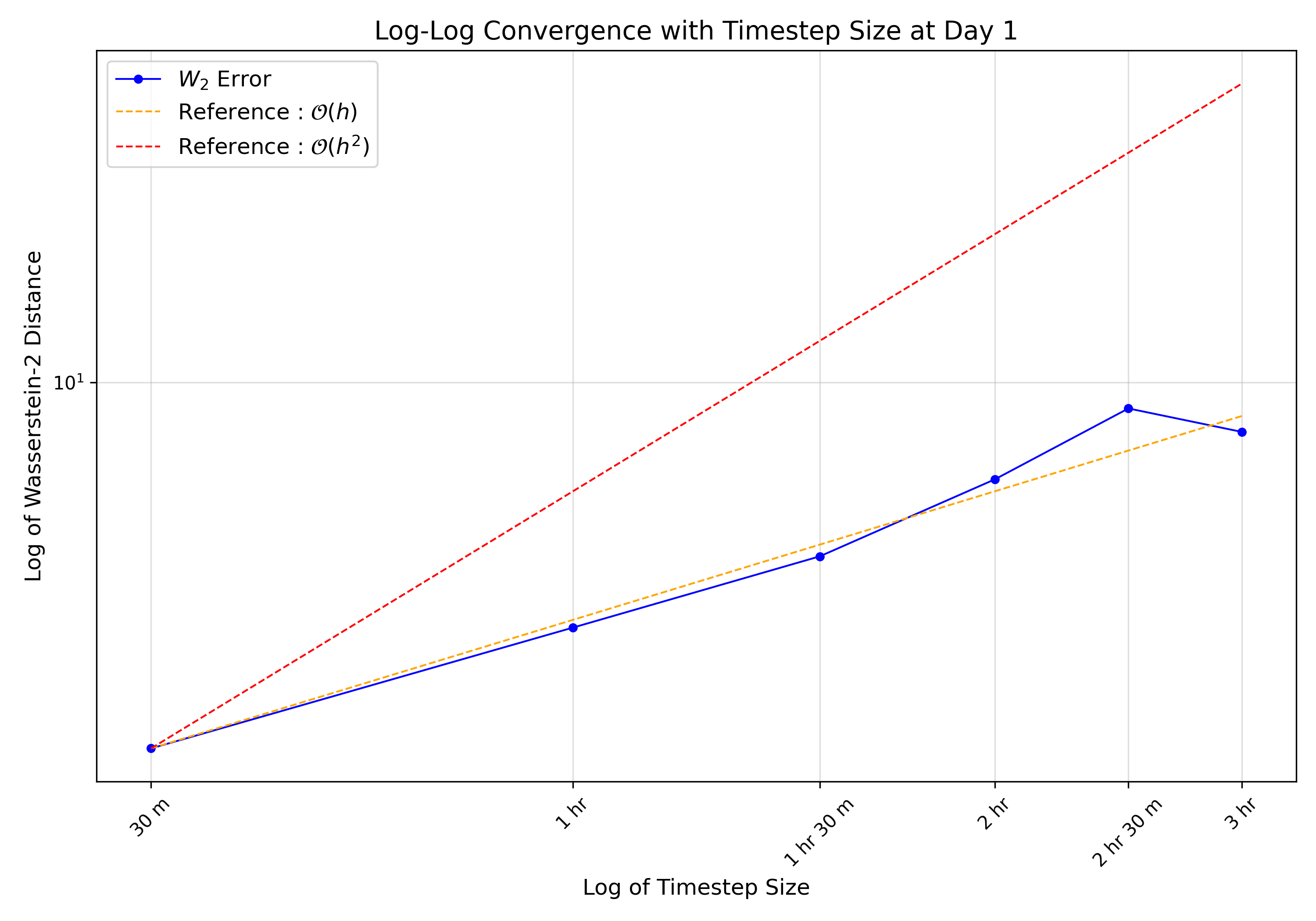}
    \caption{Timestep Error Convergence. Simulations investigating the impact of $h$ were done with $N=2592$. The scheme exhibits first-order convergence due to the non-smoothness of the internal energy field.}
    \label{fig:timesteperror}
\end{figure}

\begin{figure}[!ht]
    \centering
    \includegraphics[width=0.5\linewidth]{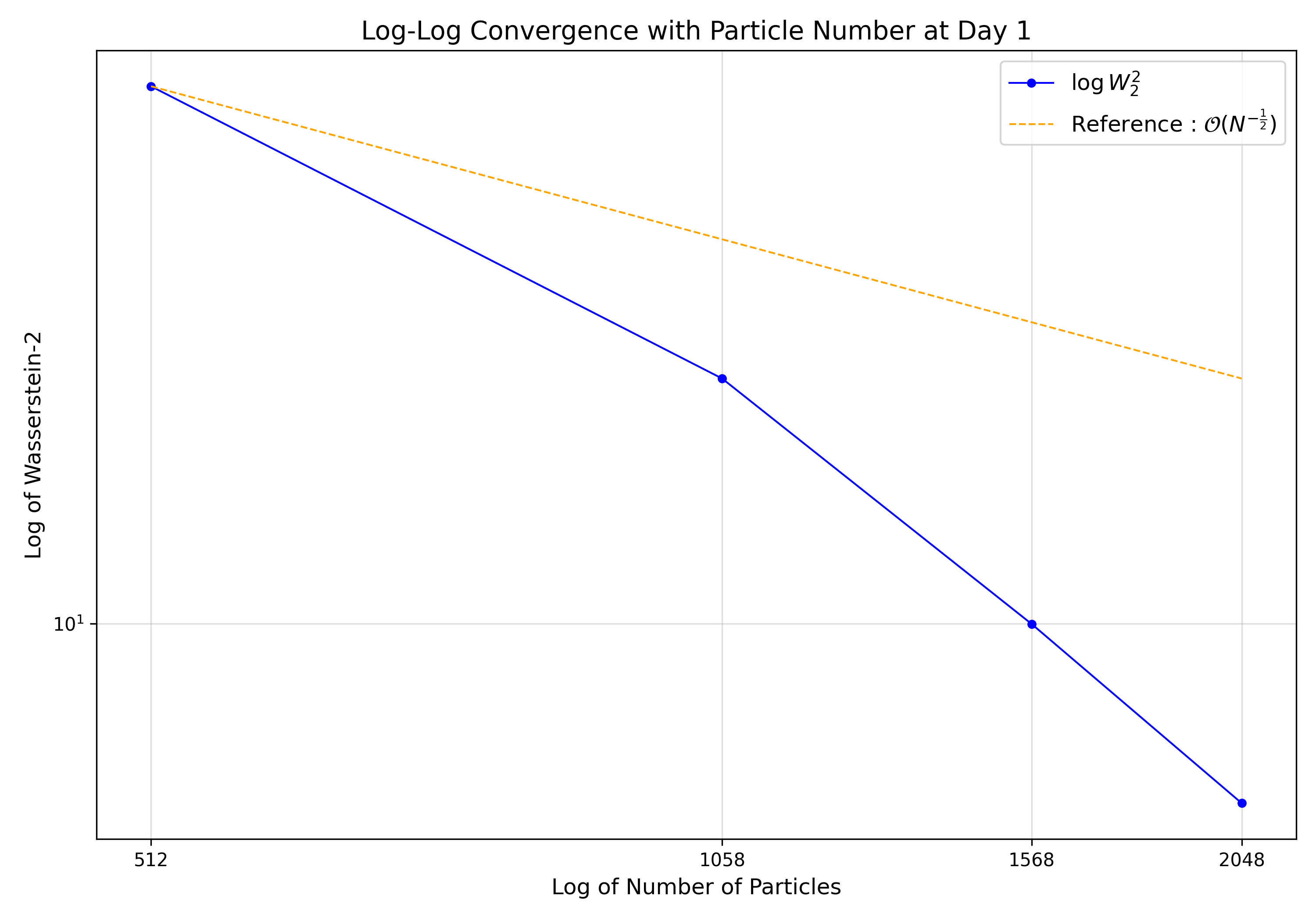}
    \caption{Particle Error Convergence. Simulations investigating the impact of $N$ were done with $h=5400$. The observed slope of $-\frac{1}{2}$ is consistent with the optimal quantization rate for 2D measures ($W_2\propto N^{-\frac{1}{2}}$)}
    \label{fig:particleerror}
\end{figure}

\section*{Acknowledgements} 
We thank Colin Cotter and Hiroe Yamazaki for stimulating discussions related to this work and sharing early drafts of their work. 
We thank Quentin Mérigot and Hugo Leclerc for their support in implementing the code.
We thank David Bourne for his invaluable advice and suggestions throughout the process.
TPL was supported by The Maxwell Institute Graduate School in Modelling, Analysis, and Computation, a Centre for Doctoral Training funded by the EPSRC (grant EP/S023291/1), the Scottish Funding Council, Heriot-Watt University and the University of Edinburgh. 
BP gratefully acknowledges the support of the EPSRC via the grant EP/P011543/1.

\bibliographystyle{abbrv}
\bibliography{references}

\newpage
\appendix

\section{Integral Reductions}\label{appendix:Integral_Reductions}

In the following we will always parametrize a path between two vertices $\vb{p}^j$ and $\vb{p}^{j+1}$ with $\upgamma:\R\to\R^d$ given by
\begin{equation}
    \upgamma(s)=s\vb{p}^{j+1}+(1-s)\vb{p}^{j},
\end{equation}
where $d$ is the dimension we are working in. 
In two dimensions we have the following cost function
\begin{equation}
    c(\vb{x},\vb{y}) = \frac{1}{y_2}\qty(\frac{\f^2}{2}\qty(x_1-y_1)^2+g x_2) - c_p\Pi_0, 
\end{equation}
and 
\begin{equation}
    \Phi(\vb{p};\vb{y})\textrm{ with }\vb{y}=\mqty[0 \\ 1],
\end{equation}
so that
\begin{equation}
    \Phi(\vb{p})=\mqty[\displaystyle\frac{p_1}{\f^2} \\ \displaystyle\frac{1}{g}\mqty(p_2 - \frac{p_1^2}{2\f^2})],
\end{equation}
and
\begin{equation}
    D\Phi=\mqty(\f^{-2} & 0 \\ -\f^{-2}p_1 & g^{-1})\textrm{ and }\abs{\det D\Phi}=\frac{1}{\f^2g}.
\end{equation}
In two dimensions $\vb{n}$ is the normal vector to the edge between two cells and $K^i$ is the number of vertices of the $i$th cell. 

By letting
\begin{align}
    \vb{y}^i\qty(\vb{z}^i) &= \vb{y}^i=\frac{1}{2z^i_2}\mqty[z_1^i \\ -1], \\
    \psi^i\qty(\vb{z}^i,w^i) &= w^i + \qty(\frac{z_1^i}{2z^i_2})^2 + \qty(\frac{1}{2z^i_2})^2 - \frac{\f^2}{2z^i_2}\qty(z^i_1)^2 + c_p\Pi_0,
\end{align}
then
\begin{align*}
    c\qty(\Phi(\vb{p}),\vb{z}^i) - w^i - c\qty(\Phi(\vb{p}),\vb{z}^j) + w^j & = \norm{\vb{p}-\vb{y}^i}^2-\norm{\vb{p}}^2 - \psi^i - \norm{\vb{p}-\vb{y}^j}^2 + \norm{\vb{p}}^2 + \psi^j \\
    &= \norm{\vb{p} - \vb{y}^i}^2 - \psi^i - \norm{\vb{p}-\vb{y}^j}^2 + \psi^j.
\end{align*}

\subsection{Gradient}

In the two dimensional problem the gradient is given by 
\begin{equation}
    \pdv{\G}{w^i}=m^i-\int_{\Lag{i}}(f^*)'(w^i-c(\vb{x},\vb{z}^i))\,\dd\H^2.
\end{equation}

We can reduce the integral to an evaluation at the vertices as by using the divergence theorem with 
\begin{equation}
    \vb{F}=\mqty[0 \\ -z_2^if^*(w^i-c(\Phi(\vb{p}),\vb{z}^i))],
\end{equation}
as follows
\begin{align*}
    &\int_{\Lag{i}}(f^*)'(w^i-c(\vb{x},\vb{z}^i))\,\dd\H^2 \\
    &= \frac{1}{\f^2g}\int_{\Phi^{-1}(\Lag{i})}(f^*)'(w^i-c(\Phi(\vb{p}),\vb{z}^i)) \,\dd\H^2 \\
    &= \frac{1}{\f^2g}\int_{\partial\Phi^{-1}(\Lag{i})}\vb{F}\cdot\vb{n}\,\dd\H^1 \\ 
    &=-\sum_{j=1}^{K^i}\frac{z_2^in_2}{\f^2g}\norm{\vb{p}^{j+1}-\vb{p}^j}\int_0^1f^*(w^i-c(\Phi(\upgamma(s)),\vb{z}^i))\,\dd s.
\end{align*}

\subsection{Hessian}

In the two dimensional problem the Hessian is given by 
\begin{align}
    \pdv{\G}{w^i}{w^j} \qty(\vb*{w},\vb{z})&= 
    \int_{\Lag{i}\cap \Lag{j}} \frac{1}{\norm{\nabla_{\vb{x}} u^{ij}(\vb{x})}}(f^*)'(w^i-c(\vb{x},\vb{z}^i)) \,\dd\H^1 \\
    \begin{split}
    \pdv{\G}{w^i}{w^i} \qty(\vb*{w}, \vb{z}) &= -\int_{\Lag{i}} \qty(f^*)''(w^i-c(\vb{x},\vb{z}^i)) \,\dd\H^2 \\
    &\quad-\sum_{i\neq j} \int_{\Lag{i}\cap \Lag{j}} \frac{1}{\norm{\nabla_{\vb{x}} u^{ij}(\vb{x})}}(f^*)'(w^i-c(\vb{x},\vb{z}^i)) \,\dd\H^1.
    \end{split}
\end{align}
Well handle the two integrals separately. First we will handle the integral with the second derivative by using the divergence theorem with
\begin{equation}
    \vb{F}=\mqty[0 \\ -z_2^i(f^*)'(w^i-c(\Phi(\vb{p}),\vb{z}^i))],
\end{equation}
as follows
\begin{align*}
    &\int_{\Lag{i}}(f^*)''(w^i-c(\vb{x},\vb{z}^i))\,\dd\H^2 \\
    &= \frac{1}{\f^2g}\int_{\Phi^{-1}(\Lag{i})}(f^*)''(w^i-c(\Phi(\vb{p}),\vb{z}^i)) \,\dd\H^2 \\
    &= \frac{1}{\f^2g}\int_{\partial\Phi^{-1}(\Lag{i})}\vb{F}\cdot\vb{n}\,\dd\H^1 \\ 
    &=\sum_{j=1}^{K^i}-\frac{z_2^in_2}{\f^2g}\norm{\vb{p}^{j+1}-\vb{p}^j}\int_0^1(f^*)'(w^i-c(\Phi(\upgamma(s)),\vb{z}^i))\,\dd s.
\end{align*}

Now we will handle the integral with the first derivative using the path parametrisation but first define
\begin{equation}
    g(\vb{x})=\norm{\nabla_{\vb{x}}u^{ik}(\vb{x})}=\sqrt{g^2\qty(\frac{1}{z^i_2}-\frac{1}{z^k_2})^2+\f^4\qty(\frac{z^i_2z^k_1-z^i_1z^k_2+x_1(z^k_2-z^i_2)}{z^i_2z^k_2})^2},
\end{equation}
letting 
\begin{align*}
    A_1 &= \frac{(p^{j+1}_1-p^j_1)^2(z^i_2-z^k_2)^2}{(z^i_2)^2(z^k_2)^2} \\
    A_2 &= \frac{2(p^{j+1}_1-p^j_1)(z^i_2-z^k_2)(p^j_1 (z^i_2 - z^k_2) + \f^2 (z^i_1 z^k_2-z^i_2 z^k_1))}{(z^i_2)^2(z^k_2)^2} \\
    A_3 &= \frac{g^2 (z^i_2 - z^k_2)^2 + (p^j_1 (z^i_2 - z^k_2) + 
   \f^2 (z^i_1 z^k_2-z^i_2 z^k_1))^2}{(z^i_2)^2 (z^k_2)^2},
\end{align*}
so that 
\begin{equation}
    g(\Phi(\upgamma(s)))=\sqrt{A_1 s^2 + A_2 s + A_3}.
\end{equation}
and therefore,
\begin{align*}
    &\int_{\Lag{i}\cap \Lag{k}}\frac{1}{\norm{\nabla_{\vb{x}}u^{ik}(\vb{x})}}(f^*)'(w^i-c(\vb{x},\vb{z}^i))\,\dd\H^1 \\
    &= \int_0^1 \frac{(f^*)'(w^i-c(\Phi(\upgamma(s)),\vb{z}^i))}{g(\Phi(\upgamma(s)))}\norm{D\Phi(\upgamma(s))\cdot\qty(\vb{p}^{j+1}-\vb{p}^j)}\,\dd s.
\end{align*}

\subsection{Centroid}

In the two dimensional problem the centroid is given by 
\begin{equation}
    \vb{C}^i(\vb{z})=\int_{\Lag{i}}\vb{x}(f^*)'(w^i-c(\vb{x},\vb{z}^i))\,\dd\H^2.
\end{equation}

We can reduce the integral to an evaluation at the vertices as by using the divergence theorem with 
\begin{align}
    \vb{F}_1&=\mqty[0 \\ -z_2^ip_1f^*(w^i-c(\Phi(\vb{p}),\vb{z}^i))] \\
    \vb{F}_2&=\mqty[\frac{p_2z^i_2}{z^i_1}f^*(w^i-c(\Phi(\vb{p}),\vb{z}^i)) \\ 0] \\
    \vb{F}_3&=\mqty[0 \\ -z_2^ip_1^2f^*(w^i-c(\Phi(\vb{p}),\vb{z}^i))] .
\end{align}
We handle the components separately,
\begin{align*}
    &\int_{\Lag{i}}x_1(f^*)'(w^i-c(\vb{x},\vb{z}^i))\,\dd\H^2 \\ 
    &= \frac{1}{\f^2g}\int_{\Phi^{-1}(\Lag{i})}\frac{p_1}{\f^2}(f^*)'(w^i-c(\Phi(\vb{p}),\vb{z}^i)) \,\dd\H^2 \\
    &= \frac{1}{\f^4g}\int_{\partial\Phi^{-1}(\Lag{i})}\vb{F}_1\cdot\vb{n}\,\dd\H^1 \\ 
    &=\sum_{j=1}^{K^i}-\frac{z_2^in_2}{\f^4g}\norm{\vb{p}^{j+1}-\vb{p}^j}\int_0^1\upgamma_1(s)f^*(w^i-c(\Phi(\upgamma(s)),\vb{z}^i))\,\dd s.
\end{align*}

Now the other component,
\begin{align*}
    &\int_{\Lag{i}}x_2(f^*)'(w^i-c(\vb{x},\vb{z}^i))\,\dd\H^2 \\
    &= \frac{1}{\f^2g}\int_{\Phi^{-1}(\Lag{i})}\frac{1}{g}\qty(p_2-\frac{p_1^2}{2\f^2})(f^*)'(w^i-c(\Phi(\vb{p}),\vb{z}^i)) \,\dd\H^2 \\
    &= \frac{1}{\f^2g^2}\int_{\partial\Phi^{-1}(\Lag{i})}\vb{F}_2\cdot\vb{n}\,\dd\H^1-\frac{1}{2\f^4g^2}\int_{\partial\Phi^{-1}(\Lag{i})}\vb{F}_3\cdot\vb{n}\,\dd\H^1 \\ 
    &=\sum_{j=1}^{K^i}\frac{z^i_2n_1}{z^i_1\f^2g^2} \norm{\vb{p}^{j+1}-\vb{p}^j}\int_0^1\upgamma_2(s)f^*(w^i-c(\Phi(\upgamma(s)),\vb{z}^i))\,\dd s \\
    &\quad+\frac{z_2^in_2}{2\f^4g^2}\norm{\vb{p}^{j+1}-\vb{p}^j}\int_0^1(\upgamma_1(s))^2f^*(w^i-c(\Phi(\upgamma(s)),\vb{z}^i))\,\dd s.
\end{align*}

\subsection{Internal Energy}

The final integral that we need to evaluate in two dimensions is the internal energy due to the compressibility and thermodynamics. This is given by
\begin{equation}
    \Pi_0\int_{\Lag{i}}(f^*)'(w^i-c(\vb{x},\vb{z}^i))\,\dd\H^2-\qty(\frac{R_d}{p_0})^{\gamma-1}\int_{\Lag{i}}((f^*)'(w^i-c(\vb{x},\vb{z}^i)))^\gamma\,\dd\H^2.
\end{equation}
The second term is just a scaled version of the integral that we evaluated for the gradient so that one is already handled. We need to treat first term separately. By choosing
\begin{equation}
    \vb{F}=\mqty[0 \\ -\frac{z^i_2(\gamma-1)}{\qty(2\gamma-1)} (w^i-c(\Phi(\vb{p}),\vb{z}^i))((f^*)'(w^i-c(\Phi(\vb{p}),\vb{z}^i)))^{\gamma}],
\end{equation}
we can apply the divergence theorem as follows
\begin{align*}
    &\int_{\Lag{i}}((f^*)'(w^i-c(\vb{x},\vb{z}^i)))^\gamma\,\dd\H^2 \\
    &=\frac{1}{\f^2g}\int_{\Phi^{-1}(\Lag{i})}((f^*)'(w^i-c(\Phi(\vb{p}),\vb{z}^i)))^{\gamma} \,\dd\H^2 \\
    &=\frac{1}{\f^2g}\int_{\partial\Phi^{-1}(\Lag{i})}\vb{F}\cdot\vb{n} \,\dd\H^1 \\
    &=-\frac{z^i_2n_2(\gamma-1)}{\f^2g(2\gamma-1)}\int_{\partial\Phi^{-1}(\Lag{i})} (w^i-c(\Phi(\vb{p}),\vb{z}^i))\qty((f^*)'(w^i-c(\Phi(\vb{p}),\vb{z}^i)))^{\gamma} \,\dd\H^1 \\
    &=\sum_{j=1}^{K^i}-\frac{z^i_2n_2(\gamma-1)}{\f^2g(2\gamma-1)}\norm{\vb{p}^{j+1}-\vb{p}^j}\int_0^1 (w^i-c(\Phi(\upgamma(s)),\vb{z}^i))\qty((f^*)'(w^i-c(\Phi(\upgamma(s)),\vb{z}^i)))^{\gamma} \,\dd s.
\end{align*}

\end{document}